\newcommand{\R}{{\mathbf R}}
\newcommand{\F}{\mathcal{F}}
\newcommand{\norm}[1]{{\left\| #1 \right\|}}
\newcommand{\abs}[1]{{\left| #1 \right|}}
\newcommand{\allow}{\operatorname{Allow}}
\newcommand{\pat}{\operatorname{Pat}}
\newcommand{\Pat}{\operatorname{Pat}}
\newcommand{\Allow}{\operatorname{Allow}}
\renewcommand{\S}{\mathcal{S}}
\newtheorem{prop}{Proposition}[section]
\newtheorem{thm}[prop]{Theorem}
\newtheorem{cor}[prop]{Corollary}
\newtheorem{lem}[prop]{Lemma}
\newtheorem{conj}{Conjecture}
\theoremstyle{definition}
\newtheorem{defn}[prop]{Definition}
\newtheorem{exmp}[prop]{Example}
\newtheorem{rem}[prop]{Remark}
\newlist{thmenum}{enumerate}{10}
\setlist[thmenum,1]{label=\textnormal{(\alph*)}}
\setlist[thmenum,2]{label=\textnormal{(\roman*)}}
\title{Allowed patterns of symmetric tent maps via commuter functions}
\author{Kassie Archer and Scott M. LaLonde}
\date{\today}
\subjclass[2010]{05A05, 37E05 (primary) and 37E15 (secondary)}
\keywords{Allowed pattern, forbidden pattern, tent map, commuter function}
\begin{document}
\maketitle

\begin{abstract}
	We introduce a new technique to study pattern avoidance in dynamical systems, namely the use of a commuter function between non-conjugate dynamical systems. We 				investigate the properties of such a commuter function, specifically $h : [0,1] \to [0,1]$ satisfying $T_1 \circ h = h \circ T_\mu$, where $T_\mu$ denotes a symmetric
	tent map of height $\mu$. We make use of this commuter function to prove strict inclusion of the set of allowed patterns of $T_\mu$ in the set of allowed patterns of $T_1$.
\end{abstract}


\section{Introduction}

Let $\S_n$ denote the set of permutations of $[n] = \{1, 2, \ldots, n\}$. We always write permutations in one-line notation: if $\pi \in \S_n$, we write
\[
	\pi=\pi_1\pi_2 \ldots \pi_n.
\]
Given a one-dimensional discrete dynamical system $f: [0,1]\to[0,1]$ and a positive integer $n$, we can associate permutations of length $n$ to certain points of $[0,1]$
as follows. Let $x \in [0,1]$, and assume $x$ is not a $k$-periodic point for any $k < n$. Define $\Pat(x, f, n)$ to be the permutation $\pi_1 \pi_2 \cdots \pi_n \in
\S_n$ whose entries are in the same relative order as the first $n$ elements of the orbit of $x$ with respect to $f$. That is, $\pi_1, \pi_2, \ldots, \pi_n$ are in the 
same relative order as
 \[
	x, f(x), f^2(x), \ldots, f^{n-1}(x).
 \]
 We call $\Pat(x, f, n)$ the \emph{ordinal pattern}, or simply \emph{pattern}, of $x$ with respect to $f$ of length $n$. 
 
 \begin{exmp}
 The pattern of $0.23$ with respect to the standard tent map, $T$, of length 5 is the permutation of length 5 in the same relative order as
 \[
 	0.23, T(0.23), T^2(0.23), T^3(0.23), T^4(0.23) 
 \] 
which when evaluated, gives:
\[
0.23, 0.46, 0.92, 0.16, 0.32.
\] 
Therefore, the pattern is $\Pat(0.23, T, 5) = 24513$.
\end{exmp} 

We call the set of all such patterns realized by elements of $[0,1]$ the \emph{allowed patterns} of $f$, denoted by $\Allow(f)$. The set of allowed patterns of $f$ of length 
$n$ is denoted by $\Allow_n(f)$.  Any permutation which is not realized as an allowed pattern of $f$ is called a \emph{forbidden pattern} of $f$. For example, the 
permutation $321\in \S_3$ is a forbidden pattern of $T$ since there is no $x\in [0,1]$ for which the sequence $x, T(x), T^2(x)$ is in decreasing order. 

The allowed and forbidden patterns of many maps from dynamical systems have been studied during the last several years, including the left shift on words \cite{Elishifts, Makarov}, signed shifts on the unit 
interval \cite{Amigosigned, AEK, ArcAllow, AE}, beta shifts \cite{Elibeta}, negative beta-shifts \cite{EM} and the logistic maps \cite{Eliu}.

It is known that for a piecewise monotone map $f: [0,1]\to[0,1]$, the size of $\Allow_n(f)$ grows at most exponentially \cite{BKP}, and thus $f$ has forbidden patterns (since the size 
of $\S_n$ grows super-exponentially). Forbidden patterns of such maps allow one to distinguish a random time series from a deterministic one \cite{AZS, AZS08, AZS2}. This occurs 
since most patterns are forbidden in a deterministic time series, while a random time series eventually contains all patterns. In addition, the size of $\abs{\Allow_n(f)}$ for a given $f$ is 
known to be directly related to the topological entropy of $f$, which measures the complexity of $f$ \cite{BKP}. Furthermore,  these ideas have also led to purely combinatorial results 
in the study of permutations \cite{AE, Elishifts, Elicyc}.

For the reasons described above, studying the allowed and forbidden patterns of a given map $f$ presents an interesting problem. In \cite{ArcAllow}, the patterns 
realized by the standard tent map, $T$, are characterized and partially enumerated. Here, we study the relationship between the allowed and forbidden patterns 
of an arbitrary symmetric tent map and those of the standard one. Given $\mu > 0$, we define the \emph{symmetric tent map} of height $\mu$ to be the piecewise linear 
function
\[
	T_\mu(x) = \begin{cases}
		2\mu x & \text{ if } 0 \leq x \leq 1/2 \\
		2\mu(1-x) & \text{ if } 1/2 < x \leq 1.
	\end{cases}
\]
This gives us a one-parameter family of discrete dynamical systems on the interval $[0,1]$. The tent maps $T=T_1$ and $T_{3/4}$ are depicted below.

\begin{figure}[h]
\label{fig:tent}
	\includegraphics[width=6cm]{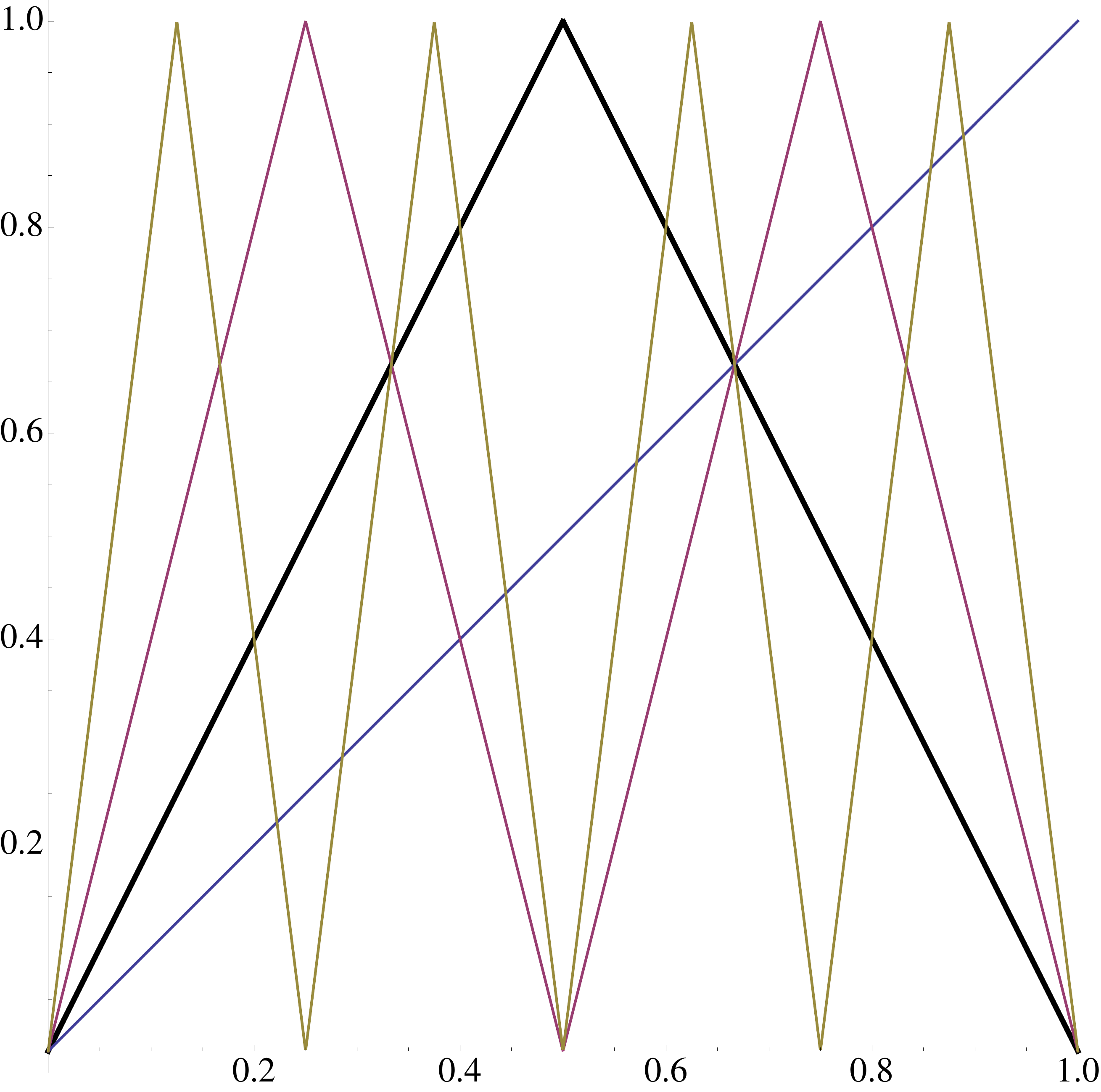}
	\hspace{2mm}
	\includegraphics[width=6cm]{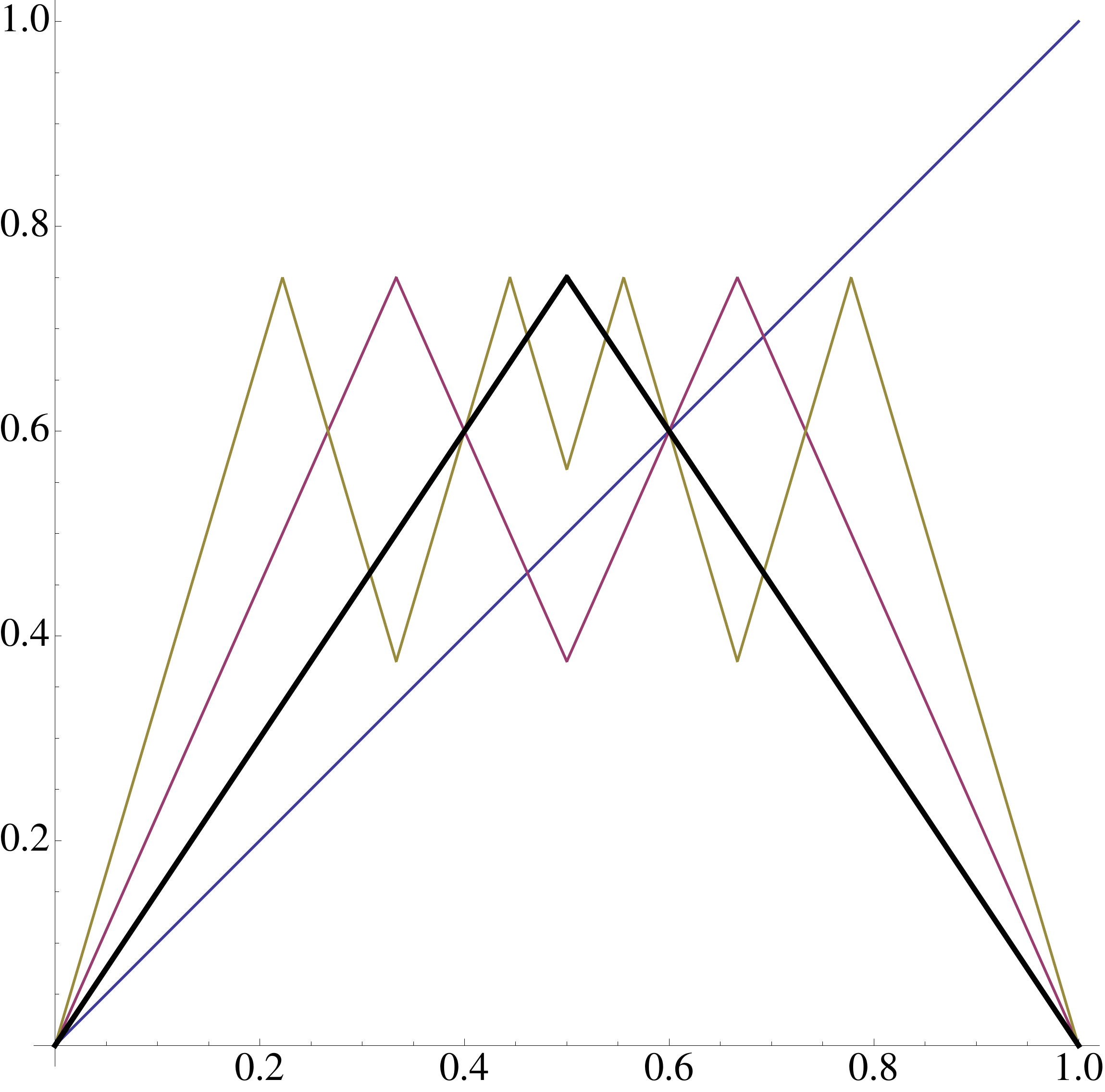}
	\caption{The first three iterates of the standard tent map $T$ (left) and $T_{3/4}$ (right), together with the line $y=x$. The tent maps themselves are depicted in bold.
	Notice that for $x$ near $1/2$, $\pat(x, T, 4) = 3412$. On the other hand, $\pat(x, T_{3/4}, 4) = 2413$ for $x$ sufficiently close to $1/2$. In fact, one can easily observe
	from the figure that the pattern $3412$ is forbidden for $T_{3/4}$.}
\end{figure}

\noindent We refer to the special case $T_1$ as the \emph{standard} or \emph{full tent map}, and we denote it simply by $T$. We also restrict our investigation to the 
situation where $1/2 < \mu \leq 1$, since the dynamics of $T_\mu$ are fairly degenerate when $\mu \leq 1/2$. For example, $T_\mu$ has an attracting fixed point
if $\mu < 1/2$. Also, $T_{1/2}$ has a continuum of fixed points.

As mentioned above, we aim to analyze the relationship between the allowed patterns of a tent map $T_\mu$ for $\mu < 1$ and the allowed patterns of the 
standard tent map $T$. One can already see from Figure \ref{fig:tent} that $T_{3/4}$ has less complex dynamics than $T$, and fewer allowed patterns. In particular,
$2341, 3412, 3124 \in \Allow(T)$, but these patterns are all forbidden for $T_{3/4}$. On the other hand, it is straightforward to check that all patterns in 
$\Allow_4(T_{3/4})$ are realized by $T$. It thus seems plausible to conjecture that $\Allow(T_\mu) \subseteq \Allow(T)$ whenever $1/2 < \mu \leq 1$.

One of the main results of this paper is a proof of the above conjecture. We prove it by constructing a strictly increasing (but not necessarily surjective or 
continuous) map $h_\mu : [0,1] \to [0,1]$ satisfying 
\begin{equation}
\label{eq:commutation}
	T \circ h_\mu = h_\mu \circ T_\mu.
\end{equation}
We will often refer to \eqref{eq:commutation} as the \emph{commutation relation}. Functions of this type have been studied in \cite{bollt-skufca}, where they 
are called \emph{commuters}. In that paper, the authors describe methods for constructing commuters, and they develop a particularly nice iterative process for 
building a conjugacy between an asymmetric tent map and a symmetric one. These functions usually look quite bizarre, since they exhibit a certain kind of 
self-similar structure by construction.

The iterative process used in \cite{bollt-skufca} to construct conjugacies can be easily adapted to build a non-homeomorphic commuter between $T_\mu$ and $T$.
We construct such a function and analyze its properties; in particular, we show that the points of discontinuity are dense 
in $[0,1]$, and that $h$ is strictly increasing. We investigate the range of $h$ (which we believe to be a Cantor-like set), and we then study the 
implications for patterns realized by the tent maps $T_\mu$ and $T$.

In Section \ref{secCommuter}, we define commuter functions and prove properties of the commuter function between tent maps. In Section \ref{secRange}, we further investigate the range of the commuter functions. In Section \ref{secAllowed}, we discuss the implications these results have for the allowed and forbidden patterns of $T_\mu$. Finally, in Section \ref{secOpen}, we discuss a few conjectures.

\section{Commuter Functions}\label{secCommuter}

Our stated goal is to study the relationship between the allowed patterns of two different tent maps. To shed some light on this question, we begin with a simpler
one. When do two dynamical systems $f, g : [0, 1] \to [0, 1]$ have the same allowed patterns? This question is tantamount to asking that $f$ and $g$ have the
``same'' dynamics. Put more precisely, in order for two dynamical systems $f, g : [0,1] \to [0,1]$ to have the same allowed patterns, it is necessary that they are 
\emph{conjugate}, meaning there is a homeomorphism $h : [0,1] \to [0,1]$ such that 
\[
	f = h^{-1} \circ g \circ h.
\]
Since we are dealing with maps on the unit interval, any such homeomorphism $h$ must be continuous and either strictly increasing or strictly decreasing. It is 
straightforward to show that if $h$ is strictly increasing (i.e., it is an order-preserving conjugacy), then $f$ and $g$ have the same allowed patterns. 

\begin{thm}
\label{thm:conjugate_patterns}
	Let $f, g : [0, 1] \to [0, 1]$ be two dynamical systems, and suppose there is a strictly increasing surjection $h : [0, 1] \to [0, 1]$ satisfying
	$f = h^{-1} \circ g \circ h$. Then $\Allow(f) = \Allow(g)$.
\end{thm}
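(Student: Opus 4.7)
The plan is to argue that a strictly increasing surjection between the intervals is automatically a bijection that preserves the order of orbits, and this is exactly what is needed to preserve ordinal patterns.

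First I would unpack the conjugacy relation. From $f = h^{-1} \circ g \circ h$ one gets $h \circ f = g \circ h$, and a straightforward induction yields $h \circ f^k = g^k \circ h$ for every $k \geq 0$. Since $h$ is a strictly increasing surjection from $[0,1]$ onto $[0,1]$, it is in particular a bijection, so $h^{-1}$ exists; moreover it is also strictly increasing (and in fact continuous, though I will not need continuity here). The essential order-preserving property I will use is that $a < b$ iff $h(a) < h(b)$.

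Next I would fix $x \in [0,1]$ and $n \geq 1$ with $x$ not $k$-periodic under $f$ for any $k < n$, and verify that $y := h(x)$ is not $k$-periodic under $g$ for any $k < n$. Indeed, if $g^k(y) = y$ for some $k < n$, then $h(f^k(x)) = g^k(h(x)) = h(x)$, and injectivity of $h$ forces $f^k(x) = x$, a contradiction. Hence $\Pat(y, g, n)$ is defined. By the commutation relation,
\[
	h(x), h(f(x)), \ldots, h(f^{n-1}(x)) = y, g(y), \ldots, g^{n-1}(y),
\]
and since $h$ preserves the strict order, the first sequence has the same relative order as $x, f(x), \ldots, f^{n-1}(x)$. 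Therefore $\Pat(x, f, n) = \Pat(h(x), g, n)$, which shows $\Allow(f) \subseteq \Allow(g)$.

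For the reverse inclusion I would invoke symmetry: $h^{-1} : [0,1] \to [0,1]$ is also a strictly increasing surjection, and the relation $f = h^{-1} \circ g \circ h$ rearranges to $g = h \circ f \circ h^{-1} = (h^{-1})^{-1} \circ f \circ h^{-1}$. Applying the argument above to the pair $(g, f)$ with $h^{-1}$ in place of $h$ gives $\Allow(g) \subseteq \Allow(f)$, completing the proof. The only subtle point is the bookkeeping around the periodicity hypothesis in the definition of $\Pat$, and it is handled cleanly by the injectivity of $h$; otherwise the argument is essentially a one-line order-preservation observation.
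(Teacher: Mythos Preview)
Your proof is correct and follows essentially the same approach as the paper: use the relation $h \circ f = g \circ h$ together with the fact that a strictly increasing map preserves relative order to transport patterns from $f$ to $g$, then invoke $h^{-1}$ for the reverse inclusion. The only difference is that you explicitly verify the non-periodicity hypothesis needed for $\Pat(h(x),g,n)$ to be defined, which the paper leaves implicit.
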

\begin{proof}
	Let $\pi \in \Allow(f)$ be a pattern of length $n$, and choose $x \in [0, 1]$ such that $\Pat(x, f, n) = \pi$. That is,
	\[
		x, f(x), f^2(x), \ldots, f^{n-1}(x)
	\]
	is in the same relative order as $\pi_1, \pi_2, \ldots, \pi_n$. Since $h$ is strictly increasing,
	\begin{equation}
	\label{eq:order}
		h(x), h(f(x)), h(f^2(x)), \ldots, h(f^{n-1}(x))
	\end{equation}
	is also in the same relative order. But we have $h \circ f = g \circ h$ by assumption, so the points in \eqref{eq:order} can be rewritten as
	\[
		h(x), g(h(x)), g^2(h(x)), \ldots, g^{n-1}(h(x)).
	\]
	This means that $\pi = \Pat(h(x), g, n)$, so $\pi \in \Allow(g)$. Hence $\Allow(f) \subseteq \Allow(g)$. The same argument shows that if $\pi \in \Allow(g)$
	is realized at a point $x \in [0,1]$, then $\pi$ is realized by $f$ at $h^{-1}(x)$. Thus $\Allow(f) = \Allow(g)$.
\end{proof}

Unfortunately, $T_\mu$ and $T$ are not conjugate if $\mu \neq 1$. (An easy way to see this is that the two maps have different topological entropies.) Therefore,
we replace the notion of conjugacy with the commutation relation defined in the introduction, and seek a function $h_\mu : [0,1] \to [0,1]$ satisfying
\[
	T \circ h_\mu = h_\mu \circ T_\mu.
\]

\begin{defn} 
	Let $f, g : [0,1] \to [0,1]$ be dynamical systems. We say that a function $h : [0,1] \to [0,1]$ is a \emph{commuter} for $f$ and $g$ if
	\[
		f \circ h = h \circ g.	
	\]
\end{defn}

As mentioned in the introduction, commuters have been studied in \cite{bollt-skufca}. The authors also exploit the commutation relation to build conjugacies that are
otherwise hard to write down. For example, they present an iterative process for constructing a conjugacy between a skew tent map and a symmetric one. It has been
observed in \cite{zheng} and \cite{lalonde} that a similar procedure can be used to construct commuters between non-conjugate dynamical systems in special cases.

In general, we can say something about the relationship between the set of allowed patterns of two maps $f$ and $g$ if there is a commuter which is order-preserving (i.e. increasing, when $f$ and $g$ are maps on the unit interval). 
\begin{thm}
\label{thm:semiconjugate_patterns}
	Let $f, g : [0, 1] \to [0, 1]$ be two dynamical systems, and suppose there is a strictly increasing function $h : [0, 1] \to [0, 1]$ satisfying
	$f \circ h = h \circ g$. Then $\Allow_n(g) \subseteq \Allow_n(f)$ for all $n\geq 1$.
\end{thm}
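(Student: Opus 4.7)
The plan is to mimic the forward direction of the proof of Theorem~\ref{thm:conjugate_patterns}, noting that surjectivity of $h$ was only invoked there to establish the reverse inclusion (via $h^{-1}$). Since we now claim only $\Allow_n(g) \subseteq \Allow_n(f)$, the hypothesis of surjectivity ought to be unnecessary, and we should not need $h$ to be continuous either.

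Concretely, I would start with a pattern $\pi \in \Allow_n(g)$ and choose $x \in [0,1]$ realizing it, so that $x, g(x), \ldots, g^{n-1}(x)$ are in the same relative order as $\pi_1, \pi_2, \ldots, \pi_n$. Because $h$ is strictly increasing, applying $h$ termwise preserves relative order, so the sequence
\[
h(x),\, h(g(x)),\, h(g^2(x)),\, \ldots,\, h(g^{n-1}(x))
\]
also has pattern $\pi$. A quick induction on $k$ using the commutation relation $f \circ h = h \circ g$ gives $h(g^k(x)) = f^k(h(x))$ for all $k \geq 0$, so this sequence equals $h(x), f(h(x)), \ldots, f^{n-1}(h(x))$. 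This will show $\pi = \Pat(h(x), f, n) \in \Allow_n(f)$.

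The only subtle point I anticipate is verifying the aperiodicity needed for $\Pat(h(x), f, n)$ to be defined: we need $h(x)$ to fail to be $k$-periodic under $f$ for every $k < n$. If $f^k(h(x)) = h(x)$ for some such $k$, then $h(g^k(x)) = h(x)$, and strict monotonicity of $h$ forces $g^k(x) = x$, contradicting the fact that $\Pat(x, g, n)$ is defined. This is the one place, beyond preservation of relative order, where strict (as opposed to merely weak) monotonicity of $h$ is essential. I do not foresee any other obstacles; the argument is essentially a one-sided reprise of the proof of Theorem~\ref{thm:conjugate_patterns} with surjectivity removed.
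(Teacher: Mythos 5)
Your proof matches the paper's argument exactly: push the orbit of a realizing point $x$ through the strictly increasing $h$, use $h \circ g^k = f^k \circ h$ to recognize the image as an $f$-orbit, and conclude. The extra paragraph on aperiodicity of $h(x)$ is a point the paper leaves implicit, and your handling of it via strict monotonicity is correct.
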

\begin{proof}
The argument is the same as in the proof of Theorem~\ref{thm:conjugate_patterns}. Suppose $\pi \in \allow_n(g)$ for some $n$, and that $\pi$ is realized by $g$ at $x$. In other words,
	\[
		x, g(x), g^2(x), \ldots, g^{n-1}(x)
	\]
	is in the same relative order as
	\[
		\pi_1, \pi_2, \ldots, \pi_n.
	\]
	Since the commuter $h$ is strictly increasing on the unit interval, it is order-preserving, so
	\begin{equation}
	\label{eq:pattern}
		h(x), h(g(x)), h(g^2(x)), \ldots, h(g^{n-1}(x))
	\end{equation}
	is in the same relative order as the entries of $\pi$. But we know that $h(g^k(x)) = f^k(h(x))$ for all $k$, so \eqref{eq:pattern} is just the pattern 
	of $f$ at $h(x)$:
	\[
		h(x), f(h(x)), f^2(h(x)), \ldots, f^{n-1}(h(x)).
	\]
	Thus $\pi$ is realized by $f$ at $h(x)$, so $\pi \in \allow(f)$.
\end{proof}

In our setting, we would like to find a function $h_\mu$ satisfying the commutation relation with $f = T$ and $g = T_\mu$ for a given value of $\mu$. To do so, we 
modify the construction from Section II.B of \cite{bollt-skufca}. The details are more or less the same, but we still attempt to provide a self-contained treatment of 
the construction. Note first that the commutation relation \eqref{eq:commutation} just says that 
\[
	T(h_\mu(x)) = h_\mu(T_\mu(x))
\]
for all $x \in [0,1]$. If $x \in [0,1/2]$, this equation becomes
\begin{equation}
\label{eq:one}
	T(h_\mu(x)) = h_\mu(2\mu x),
\end{equation}
while for $x \in (1/2, 1]$ we have
\begin{equation}
\label{eq:two}
	T(h_\mu(x)) = h_\mu(2\mu(1-x)).
\end{equation}
Even though $h_\mu$ is not a conjugacy, it should preserve the monotone intervals of $T$ and $T_\mu$ if it is to give us any meaningful information about the dynamics
and allowed patterns. Therefore, we require that $h_\mu([0,1/2]) \subseteq [0,1/2]$ and $h_\mu((1/2,1]) \subseteq(1/2,1]$. Under this assumption, \eqref{eq:one} becomes
\[
	2h_\mu(x) = h_\mu(2\mu x),
\]
or
\[
	h_\mu(x) = \tfrac{1}{2} h_\mu(2\mu x).
\]
On the other hand, \eqref{eq:two} yields
\[
	2(1-h_\mu(x)) = h_\mu(2\mu(1-x))
\]
or
\[
	h_\mu(x) = 1-\tfrac{1}{2}h_\mu(2\mu(1-x)).
\]
Therefore, $h_\mu$ is a commuter if it satisfies the functional equation
\renewcommand{\arraystretch}{1.75}
\begin{equation}
\label{eq:functional}
	h_\mu(x) = \left\{ \begin{array}{cc}
		\frac{1}{2} h_\mu(2\mu x) & \text{ if } 0 \leq x \leq 1/2 \\
		1-\frac{1}{2}h_\mu(2\mu(1-x)) & \text{ if } 1/2 < x \leq 1
	\end{array}
	\right.
\end{equation}

To show that such a function exists, we invoke the Contraction Mapping Theorem. Let $\mathcal{X} = B([0,1],\R)$ denote the space of bounded real-valued functions 
on $[0,1]$, which is a complete metric space under the norm $\norm{f}_\infty = \sup_{x \in [0, 1]} \abs{f(x)}$. Define an operator $M_\mu : \mathcal{X} \to \mathcal{X}$ by
\renewcommand{\arraystretch}{1.75}
\begin{equation}
\label{eq:operator}
	M_\mu f(x) = \left\{ \begin{array}{cc}
		\frac{1}{2} f(2\mu x) & \text{ if } 0 \leq x \leq 1/2 \\
		1-\frac{1}{2}f(2\mu (1-x)) & \text{ if } 1/2 < x \leq 1
	\end{array}
	\right.
\end{equation}
Note that $h_\mu$ is a solution to \eqref{eq:functional} precisely when it is a fixed point of $M_\mu$. Since $h_\mu$ should map the unit interval to itself, we are 
particularly interested in the restriction of $M_\mu$ to the closed subset
\[
	\mathcal{F} = \left\{ f \in \mathcal{X} \mid f : [0,1] \to [0,1] \right\}.
\]

\begin{lem}
	The operator $M_\mu$ maps $\mathcal{F}$ to itself. In particular, if $f \in \mathcal{F}$, then $M_\mu f$ maps $[0,1/2]$ to $[0,1/2]$ and $(1/2,1]$ to $[1/2,1]$.
\end{lem}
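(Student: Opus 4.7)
The plan is to verify the two image containments directly from the piecewise formula for $M_\mu$, using only that elements of $\mathcal{F}$ take values in $[0,1]$ and that $1/2 < \mu \leq 1$. Fix $f \in \mathcal{F}$, so $f(y) \in [0,1]$ for every $y \in [0,1]$, and split into cases based on which branch of \eqref{eq:operator} applies.

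First, for $x \in [0, 1/2]$, the argument $2\mu x$ ranges over $[0, \mu] \subseteq [0,1]$ since $\mu \leq 1$, so $f(2\mu x)$ is defined and lies in $[0,1]$. Then $M_\mu f(x) = \frac{1}{2} f(2\mu x) \in [0, 1/2]$, which gives the first containment. Second, for $x \in (1/2, 1]$, we have $1 - x \in [0, 1/2)$, so $2\mu(1-x) \in [0, \mu) \subseteq [0, 1)$, and again $f(2\mu(1-x)) \in [0,1]$. Therefore $\tfrac{1}{2} f(2\mu(1-x)) \in [0, 1/2]$, and $M_\mu f(x) = 1 - \tfrac{1}{2} f(2\mu(1-x)) \in [1/2, 1]$.

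Combining the two cases shows $M_\mu f$ maps $[0,1]$ into $[0,1]$, so $M_\mu f \in \mathcal{F}$. The only nontrivial ingredient is that the hypothesis $\mu \leq 1$ keeps the rescaled arguments $2\mu x$ and $2\mu(1-x)$ inside the domain of $f$; without this, the piecewise formula \eqref{eq:operator} would not even be well-defined on elements of $\mathcal{F}$. In short, there is no real obstacle here --- the lemma is essentially a consistency check built into the construction of $M_\mu$, and boundedness of $M_\mu f$ is automatic from the fact that its image already sits in $[0,1]$.
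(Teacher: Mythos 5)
Your proof is correct and follows essentially the same case-split on $[0,1/2]$ versus $(1/2,1]$ as the paper, differing only in that you spell out explicitly why $\mu \leq 1$ keeps the rescaled arguments inside $[0,1]$. No issues.
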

\begin{proof}
	Suppose $f \in \F$. If $x \in [0,1/2]$, then
	\[
		M_\mu f(x) = \tfrac{1}{2} f(2\mu x),
	\]
	which belongs to $[0,1/2]$ since $0 \leq f(2\mu x) \leq 1$. Thus $M_\mu f$ maps $[0,1/2]$ to $[0,1/2]$. Similarly, if $x \in (1/2,1]$, then
	\[
		M_\mu f(x) = 1-\tfrac{1}{2}f(2\mu (1-x))
	\]
	belongs to $[1/2,1]$. Consequently, $M_\mu f \in \F$.
\end{proof}


\begin{lem}
	The operator $M_\mu$ is contractive on $\F$.
\end{lem}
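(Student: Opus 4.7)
The plan is to show that $M_\mu$ satisfies the contraction inequality $\norm{M_\mu f - M_\mu g}_\infty \leq \tfrac{1}{2}\norm{f - g}_\infty$ for all $f, g \in \F$, which will give a contraction constant of $1/2$ independent of $\mu$.

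First I would check that the inputs to $f$ and $g$ appearing in \eqref{eq:operator} actually lie in $[0,1]$, so that the expression is well defined. For $x \in [0, 1/2]$ we have $2\mu x \in [0, \mu] \subseteq [0,1]$ because $\mu \leq 1$, and for $x \in (1/2, 1]$ we have $2\mu(1-x) \in [0, \mu) \subseteq [0,1]$. So no boundary issues arise.

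Next I would split into the two cases of the piecewise definition. For $x \in [0, 1/2]$,
\[
	M_\mu f(x) - M_\mu g(x) = \tfrac{1}{2}\bigl( f(2\mu x) - g(2\mu x) \bigr),
\]
so $\abs{M_\mu f(x) - M_\mu g(x)} \leq \tfrac{1}{2}\norm{f-g}_\infty$. For $x \in (1/2, 1]$, the constant $1$ cancels, giving
\[
	M_\mu f(x) - M_\mu g(x) = -\tfrac{1}{2}\bigl( f(2\mu(1-x)) - g(2\mu(1-x)) \bigr),
\]
and again $\abs{M_\mu f(x) - M_\mu g(x)} \leq \tfrac{1}{2}\norm{f-g}_\infty$. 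Taking the supremum over $x \in [0,1]$ yields $\norm{M_\mu f - M_\mu g}_\infty \leq \tfrac{1}{2}\norm{f-g}_\infty$, which shows $M_\mu$ is a contraction with constant $1/2$.

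There is no real obstacle here; the only mildly delicate point is making sure the arguments $2\mu x$ and $2\mu(1-x)$ stay inside $[0,1]$ so that $f$ and $g$ are being evaluated on their domain, which is exactly why the restriction $1/2 < \mu \leq 1$ from the introduction is important. Once the computation is done, combined with the previous lemma that $M_\mu$ maps $\F$ into itself and the completeness of $\F$ as a closed subset of the Banach space $\X$, the Contraction Mapping Theorem will produce the unique fixed point $h_\mu$ in a subsequent step.
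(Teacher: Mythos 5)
Your proof is correct and follows essentially the same approach as the paper: split into the two cases of the piecewise definition, observe that the difference is $\tfrac{1}{2}$ times a difference of values of $f$ and $g$, and conclude that the sup norm contracts by a factor of $\tfrac{1}{2}$. The paper organizes the estimate as suprema over the two subintervals rather than a pointwise bound, but the content is identical; your added remark that $2\mu x$ and $2\mu(1-x)$ stay in $[0,1]$ is a harmless extra sanity check.
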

\begin{proof}
	Let $f, g \in \F$. Then we have
	\begin{align*}
		\sup_{x \in [0, \frac{1}{2}]} \abs{M_\mu f(x) - M_\mu g(x)} &= \sup_{x \in [0, \frac{1}{2}]} \abs{\tfrac{1}{2} f(2\mu x) - \tfrac{1}{2} g(2\mu x)} \\
		 	&= \tfrac{1}{2} \sup_{x \in [0, \frac{1}{2}]} \abs{f(2\mu x) - g(2\mu x)} \\
			&= \tfrac{1}{2} \sup_{x \in [0, \mu]} \abs{f(x) - g(x)} \\
			& \leq \tfrac{1}{2} \sup_{x \in [0, 1]} \abs{f(x) - g(x)}.
	\end{align*}
	Similarly,
	\begin{align*}
		\sup_{x \in (\frac{1}{2},1]} \abs{M_\mu f(x) - M_\mu g(x)} &= \sup_{x \in (\frac{1}{2},1]} \abs{(1-\tfrac{1}{2} f(2\mu (1-x))) - (1-\tfrac{1}{2} g(2\mu(1-x)))} \\
		 	&= \tfrac{1}{2} \sup_{x \in (\frac{1}{2},1]} \abs{g(2\mu(1-x)) - f(2\mu(1-x))} \\
			&= \tfrac{1}{2} \sup_{x \in [0, \mu)} \abs{g(x) - f(x)} \\
			& \leq \tfrac{1}{2} \sup_{x \in [0, 1]} \abs{f(x) - g(x)}.
	\end{align*}
	Thus $\norm{M_\mu f - M_\mu g}_\infty \leq \frac{1}{2} \norm{f-g}_\infty$ for all $f, g \in \F$, so $M_\mu$ is contractive.
\end{proof}

Since $\F$ is complete and $M_\mu$ is a contraction, the Contraction Mapping Theorem guarantees that $M_\mu$ has a unique fixed point $h_\mu \in \F$. But we have 
already observed that a fixed point for $M_\mu$ satisfies the functional equation \eqref{eq:functional}, and hence is the desired commuter. To summarize:

\begin{thm}
	The fixed point $h_\mu$ of the contraction $M_\mu$ satisfies the commutation relation $T \circ h_\mu = h_\mu \circ T_\mu$.
\end{thm}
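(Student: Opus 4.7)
The plan is to verify the commutation relation directly by unwinding the fixed point condition. Since $h_\mu = M_\mu h_\mu$, the fixed point satisfies the functional equation \eqref{eq:functional}; indeed, the discussion preceding the definition of $M_\mu$ essentially derived this functional equation from the commutation relation under the assumption that $h_\mu$ preserves the two monotone branches, so the proof simply runs that derivation in reverse. The additional ingredient needed is the preceding lemma, which tells us that $h_\mu$ maps $[0,1/2]$ into $[0,1/2]$ and $(1/2,1]$ into $[1/2,1]$, so that $T(h_\mu(x))$ can be evaluated on the correct branch in each case.

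I would then split the verification into two cases according to which branch of $T_\mu$ contains $x$. If $x \in [0,1/2]$, then $T_\mu(x) = 2\mu x$, and by \eqref{eq:functional},
\[
h_\mu(T_\mu(x)) = h_\mu(2\mu x) = 2 h_\mu(x).
\]
Since $h_\mu(x) \in [0,1/2]$, we also have $T(h_\mu(x)) = 2 h_\mu(x)$, so the two sides agree. If $x \in (1/2,1]$, then $T_\mu(x) = 2\mu(1-x)$, and by \eqref{eq:functional},
\[
h_\mu(T_\mu(x)) = h_\mu(2\mu(1-x)) = 2\bigl(1 - h_\mu(x)\bigr).
\]
Since $h_\mu(x) \in [1/2,1]$, we have $T(h_\mu(x)) = 2(1 - h_\mu(x))$ as well (the boundary case $h_\mu(x) = 1/2$ causes no trouble, since the two branches of $T$ both give $T(1/2) = 1$), so again the two sides agree.

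There is no substantive obstacle here: all of the work has already been invested in constructing the operator $M_\mu$ so that its fixed point satisfies the functional equation, and in establishing via the preceding lemma that the fixed point respects the two monotone branches of $T_\mu$. Once these are in hand, the proof of the theorem reduces to the routine case analysis above.
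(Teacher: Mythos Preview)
Your proof is correct and follows exactly the approach the paper intends: the paper presents this theorem as a summary of the preceding discussion (deriving the functional equation \eqref{eq:functional} from the commutation relation, then invoking the Contraction Mapping Theorem), and your case analysis simply makes explicit the reverse implication the paper asserts without a formal proof block. The only thing to note is that the branch-preservation you cite comes from applying the first lemma to $h_\mu = M_\mu h_\mu$, which you do correctly.
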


\begin{rem}
	While $h_\mu$ is the unique fixed point of the contraction $M_\mu$ (hence the unique solution to the functional equation \eqref{eq:functional}),
	there are other commuters for the maps $T$ and $T_\mu$. We could have instead defined a contraction $M_\mu' : \mathcal{X} \to \mathcal{X}$ by
	\[
		M'_\mu f(x) = \left\{ \begin{array}{cc}
			\frac{1}{2} f(2\mu x) & \text{ if } 0 \leq x < 1/2 \\
			1-\frac{1}{2}f(2\mu (1-x)) & \text{ if } 1/2 \leq x \leq 1,
		\end{array} \right.
	\]
	which is equivalent to requiring that the commuter maps $[0, 1/2)$ to $[0, 1/2)$ and $[1/2, 1]$ to $[1/2, 1]$. This contraction yields a different
	commuter $h'_\mu$, though it agrees with $h_\mu$ everywhere except the points of discontinuity.
\end{rem}

\begin{rem}
	There is an extra advantage to our use of the Contraction Mapping Theorem. Since its proof is constructive, we obtain an iterative process for
	defining the fixed point $h_\mu$. If we start with any function $f_0 \in \F$ and define the sequence of functions
	\[
		f_{n} = M_\mu f_{n-1},
	\]
	then $f_n \to h_\mu$ uniformly. That is, we can define
	\[
		h_\mu = \lim_{n \to \infty} f_n.
	\]
	It is often useful to take either $f_0(x) = x$ or $f_0(x) = 1/2$. This construction also gives us an estimate for the speed of convergence. If $f_0 \in \F$, 
	then $\norm{f_0 - h_\mu}_\infty \leq 1$. Therefore,
	\[
		\norm{f_1 - h_\mu}_\infty = \norm{M_\mu f_0 - M_\mu h_\mu}_\infty \leq \tfrac{1}{2} \norm{f_0 - h_\mu} \leq \tfrac{1}{2}.
	\]
	Continuing inductively, we find that
	\[
		\norm{f_n - h_\mu}_\infty \leq \tfrac{1}{2^n}
	\]
	for each $n$.
\end{rem}

\begin{exmp}
	Take $\mu = 3/4$. Then the commuter $h_{3/4}$ is depicted below.
	\begin{figure}[htbp]
		\includegraphics[width=6.25cm]{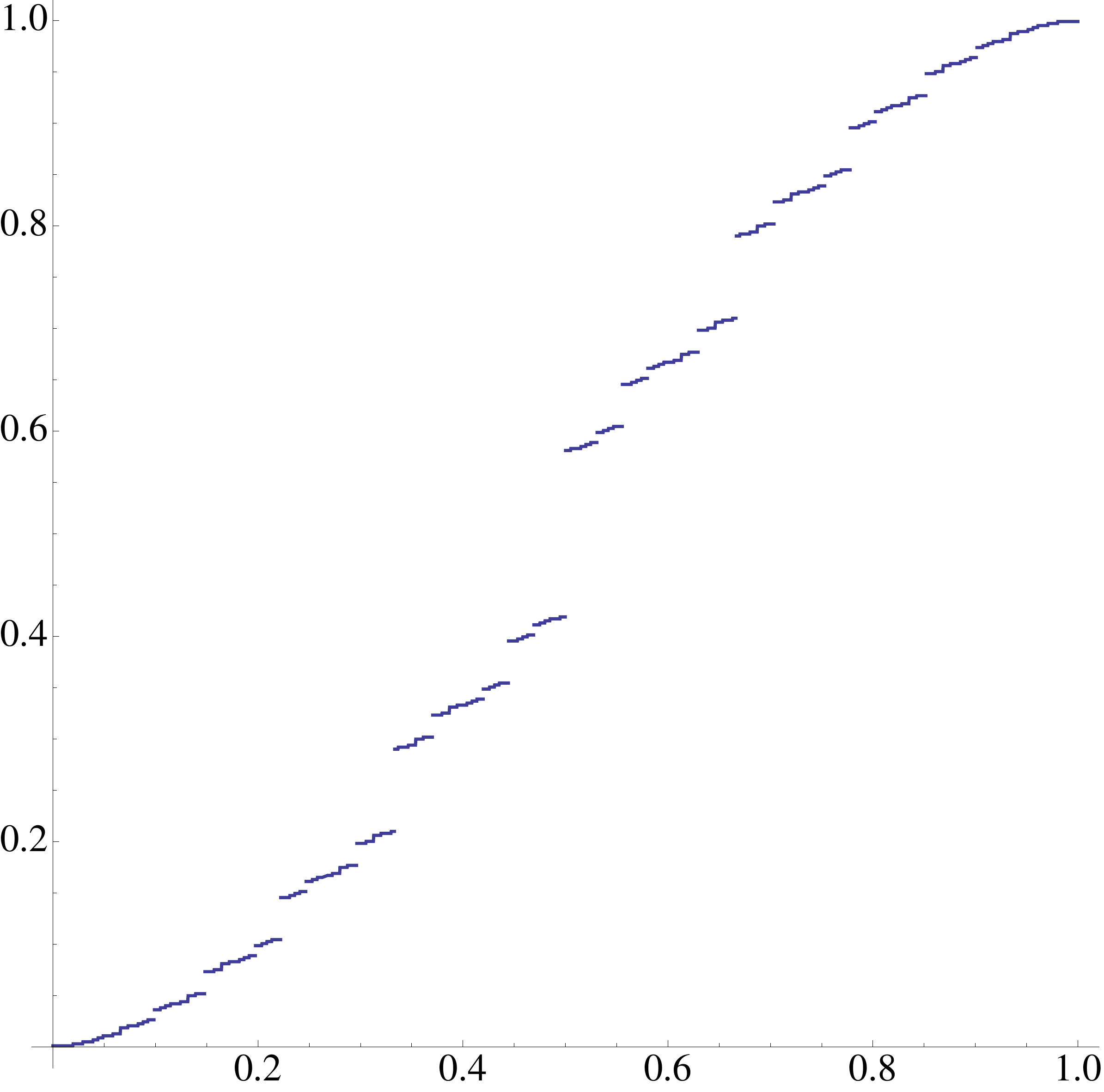}
		\caption{The commuter $h_{3/4}$. Notice that the function is highly discontinuous, and its range has the appearance of a Cantor set. However, it does
		appear to be increasing.}
	\end{figure}
\end{exmp}

Our ultimate goal is to prove that $\Allow(T_\mu) \subseteq \Allow(T)$ for all $\mu > 1/2$. To do this, we need to know that $h_\mu$ is order-preserving. Therefore, we 
now set about proving that $h_\mu$ is always strictly increasing for $\mu > 1/2$. We first develop some useful properties and then tackle the main proof.

\begin{lem}
	The function $h_\mu : [0,1] \to [0,1]$ is monotone increasing.
\end{lem}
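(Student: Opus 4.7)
The natural strategy is to show that the contraction $M_\mu$ preserves the property of being monotone increasing, and then to deduce from the iterative construction in the remark above that $h_\mu$, as a uniform limit, inherits monotonicity.

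More precisely, first I would introduce the subset
\[
\F_{\uparrow} = \{ f \in \F : f \text{ is monotone increasing on } [0,1] \}
\]
and verify that $M_\mu(\F_{\uparrow}) \subseteq \F_{\uparrow}$. To do this, take $f \in \F_{\uparrow}$ and compare $M_\mu f(x_1)$ with $M_\mu f(x_2)$ for $x_1 < x_2$. There are three cases. If both points lie in $[0, 1/2]$, then $M_\mu f(x_i) = \tfrac{1}{2} f(2\mu x_i)$ and monotonicity follows directly from the monotonicity of $f$ together with $2\mu > 0$. If both points lie in $(1/2, 1]$, then $M_\mu f(x_i) = 1 - \tfrac{1}{2} f(2\mu(1-x_i))$; the map $x \mapsto 2\mu(1-x)$ is decreasing, so applying $f$ reverses direction once and the outer minus sign reverses it again. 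Finally, if $x_1 \leq 1/2 < x_2$, then the previous lemma gives $M_\mu f(x_1) \leq 1/2 \leq M_\mu f(x_2)$. This covers all the cases and shows $M_\mu f$ is increasing.

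Next, starting from any increasing initial function $f_0 \in \F_{\uparrow}$ (for example $f_0(x) = x$), induction on the iterates $f_n = M_\mu f_{n-1}$ shows that $f_n \in \F_{\uparrow}$ for every $n$. By the remark following the contraction argument, $f_n \to h_\mu$ uniformly, and in particular pointwise. A pointwise limit of monotone increasing functions is itself monotone increasing: if $x_1 < x_2$, then $f_n(x_1) \leq f_n(x_2)$ for every $n$, and passing to the limit preserves the weak inequality. This yields $h_\mu(x_1) \leq h_\mu(x_2)$, proving the lemma.

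The argument is essentially routine; the only subtle point is the transition case $x_1 \leq 1/2 < x_2$, which is handled by the prior lemma constraining where the two halves of $M_\mu f$ take values. Note that this proof only yields \emph{weakly} increasing behavior — strict monotonicity is a delicate property that the passage to a pointwise limit will not automatically preserve, and it will clearly require the separate treatment promised in the sentence immediately before the lemma statement.
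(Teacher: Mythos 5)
Your proposal is correct and follows essentially the same approach as the paper: show by induction that $M_\mu$ preserves monotonicity of the iterates $f_n$, then pass to the uniform limit. The one small difference is in the transition case $x_1 \leq 1/2 < x_2$---the paper adds a short extra step to obtain $f_n(x_2) > 1/2$ strictly (so that each $f_n$ is in fact strictly increasing), whereas you settle for the weak inequality $M_\mu f(x_1) \leq 1/2 \leq M_\mu f(x_2)$; either way only weak monotonicity of $h_\mu$ survives the limit, which is exactly what the lemma asserts, and as you correctly note strict monotonicity must be established separately.
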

\begin{proof}
	We begin by setting $f_0(x) =x$ and $f_{n+1}(x) = M_\mu f_n(x)$. We show by induction that each $f_n$ is strictly increasing, so $h_\mu = \lim f_n$ is, at the
	very least, monotone increasing.
	
	Certainly $f_0$ is strictly increasing. Suppose then that $f_{n-1}$ is strictly increasing. To show that $f_n$ is strictly increasing, we need to consider three cases.
	
	\begin{itemize}
		\item If $x < y \leq 1/2$, then we have
		\[
			f_n(x) = \tfrac{1}{2} f_{n-1}(2\mu x), \quad f_n(y) = \tfrac{1}{2} f_{n-1}(2\mu y).
		\]
		But $2\mu x < 2\mu y$, so $f_{n-1}(2\mu x) < f_{n-1}(2\mu y)$. Thus $f_n(x) < f_n(y)$.
	
		\item If $1/2 < x < y$, then we have
		\[
			f_n(x) = 1 - \tfrac{1}{2} f_{n-1}(2\mu(1-x))
		\]
		and
		\[
			f_n(y) = 1 - \tfrac{1}{2} f_{n-1}(2\mu(1-y)).
		\]
		Since $1-y < 1-x$, $f_{n-1}(2\mu(1-y)) < f_{n-1}(2\mu(1-x))$, so $f_n(x) < f_n(y)$.
	
		\item Suppose $x \leq 1/2 < y$. We have already established the fact that each $f_n$ maps $[0,1/2]$ to $[0,1/2]$ and $(1/2,1]$ to $[1/2,1]$. Thus we at
		least have $f_n(x) \leq 1/2 \leq f_n(y)$. Since $y > 1/2$, $2\mu(1-y) < \mu$, so $f_{n-1}(2\mu(1-y)) < f_{n-1}(\mu) \leq 1$. Therefore,
		\[
			f_n(y) > 1 - \tfrac{1}{2} f_{n-1}(\mu) \geq \tfrac{1}{2},
		\]
		so we indeed have $f_n(x) < f_n(y)$. 
	\end{itemize}
	
	Therefore, each $f_n$ is strictly increasing. Since $h_\mu = \lim_{n \to \infty} f_n$ is a uniform limit of increasing functions, it is increasing, and we are done.
\end{proof}

\begin{lem}
	If $\mu\in(0,1]$, we have $h_\mu(0) = 0$ and $h_\mu(1) = 1$.
\end{lem}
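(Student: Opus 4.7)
The plan is to deduce both equalities directly from the functional equation \eqref{eq:functional} satisfied by the fixed point $h_\mu$, rather than going through the iteration scheme. This is probably the quickest route since the value of $h_\mu$ at the endpoints is pinned down by self-referential identities.

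First I would evaluate \eqref{eq:functional} at $x = 0$. Since $0 \in [0, 1/2]$, the first branch applies, giving
\[
    h_\mu(0) = \tfrac{1}{2} h_\mu(2\mu \cdot 0) = \tfrac{1}{2} h_\mu(0).
\]
This forces $h_\mu(0) = 0$. Next I would evaluate \eqref{eq:functional} at $x = 1$, which lies in $(1/2, 1]$, so the second branch applies:
\[
    h_\mu(1) = 1 - \tfrac{1}{2} h_\mu\bigl(2\mu(1-1)\bigr) = 1 - \tfrac{1}{2} h_\mu(0).
\]
Substituting the value $h_\mu(0) = 0$ just obtained yields $h_\mu(1) = 1$.

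There is really no main obstacle here; everything follows from the fact that the commuter has already been produced as a genuine fixed point of $M_\mu$, so one is entitled to plug specific values into the functional equation. The only point worth noting is that the argument works uniformly for all $\mu \in (0, 1]$, since in both substitutions the argument of $h_\mu$ on the right-hand side lands back in $[0,1]$ regardless of the value of $\mu$.
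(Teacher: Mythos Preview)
Your proof is correct and essentially identical to the paper's own argument: both plug $x=0$ and $x=1$ into the functional equation \eqref{eq:functional}, obtain $h_\mu(0)=\tfrac12 h_\mu(0)$ to force $h_\mu(0)=0$, and then substitute into $h_\mu(1)=1-\tfrac12 h_\mu(0)$ to get $h_\mu(1)=1$.
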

\begin{proof}
	We simply need to notice that
	\[
		h_\mu(0) = \tfrac{1}{2}h_\mu(2\mu \cdot 0) = \tfrac{1}{2} h_\mu(0),
	\]
	which forces $h_\mu(0)=0$. As a result,
	\[
		h_\mu(1) = 1 - \tfrac{1}{2}h_\mu(2\mu(1-1)) = 1 - \tfrac{1}{2}h_\mu(0) = 1. \qedhere
	\]
\end{proof}

\begin{lem}\label{lem:hmu not 1}
	If $1/2 < \mu < 1$, then $h_\mu(\mu) \neq 1$.
\end{lem}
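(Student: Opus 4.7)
My plan is to argue by contradiction: assume $h_\mu(\mu) = 1$. Applying the commutation relation $T \circ h_\mu = h_\mu \circ T_\mu$ at $x = \mu$ and noting $T(1) = 0$ yields $h_\mu(T_\mu(\mu)) = 0$; since $\mu > 1/2$, the right branch of $T_\mu$ applies at $\mu$, so this becomes $h_\mu(2\mu(1-\mu)) = 0$. Because $h_\mu$ is monotone increasing with $h_\mu(0) = 0$, this forces $h_\mu \equiv 0$ on the whole interval $[0, 2\mu(1-\mu)]$.

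The next step is to inflate this zero set using the functional equation until it extends past $1/2$. The first branch of \eqref{eq:functional} reads $h_\mu(y) = \tfrac{1}{2}h_\mu(2\mu y)$ for $y \in [0, 1/2]$, so whenever $h_\mu \equiv 0$ on an interval $[0, c]$ with $c \leq 1/2$, rearranging gives $h_\mu(2\mu y) = 2h_\mu(y) = 0$ for every such $y$, hence $h_\mu \equiv 0$ on $[0, 2\mu c]$. I begin with $c_0 = 2\mu(1-\mu)$, which satisfies $c_0 \leq 1/2$ because
\[
    2\mu(1-\mu) = \tfrac{1}{2} - \tfrac{1}{2}(2\mu - 1)^2,
\]
and then iterate, setting $c_{n+1} = 2\mu c_n = (2\mu)^{n+1}(1-\mu)$ for as long as $c_n \leq 1/2$. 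Since $\mu > 1/2$ gives $2\mu > 1$, the sequence $c_n$ is strictly increasing and unbounded, so there is a smallest $N \geq 1$ with $c_N > 1/2$; the condition $c_{N-1} \leq 1/2$ justifies one final inflation, producing $h_\mu \equiv 0$ on $[0, c_N] \supsetneq [0, 1/2]$.

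This contradicts the fact, established when verifying that $M_\mu$ preserves $\F$, that $h_\mu$ sends $(1/2, 1]$ into $[1/2, 1]$: every $y \in (1/2, c_N]$ would simultaneously satisfy $h_\mu(y) = 0$ and $h_\mu(y) \geq 1/2$. Hence the assumption $h_\mu(\mu) = 1$ is untenable.

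The main obstacle is little more than the bookkeeping for the iteration: one must confirm that $c_0 \leq 1/2$ so the inflation step applies at all, and that each $c_n$ stays in $[0,1]$ so the conclusion is meaningful. Everything else is driven by the observation that $2\mu > 1$, which is precisely where the hypothesis $\mu > 1/2$ is used; the conclusion genuinely fails at $\mu = 1$, where $2\mu(1-\mu) = 0$ and the argument collapses at its first step.
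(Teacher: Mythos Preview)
Your proof is correct. Both you and the paper begin identically---assume $h_\mu(\mu)=1$ and deduce $h_\mu(2\mu(1-\mu))=0$ from the commutation/functional relation---but then diverge in how the contradiction is extracted. The paper iterates the functional equation \emph{backward} from $1/2$: since $h_\mu(1/2)=\tfrac12 h_\mu(\mu)=\tfrac12$, repeated application of $h_\mu(y)=\tfrac12 h_\mu(2\mu y)$ gives $h_\mu\bigl(\tfrac{1}{2^n\mu^{n-1}}\bigr)=\tfrac{1}{2^n}$, and for $n$ large this produces a point strictly below $2\mu(1-\mu)$ with strictly positive $h_\mu$-value, violating monotonicity directly. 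You instead iterate \emph{forward}, inflating the zero set $[0,c_n]$ by the factor $2\mu>1$ until it swallows a point of $(1/2,1]$, where $h_\mu\geq 1/2$ by construction of $M_\mu$. The two arguments are dual---one pushes positive values down toward $0$, the other pushes zeros up past $1/2$---and both hinge on exactly the same mechanism, namely that $2\mu>1$ makes the relevant geometric iteration escape any bounded range. The paper's version is marginally shorter since it avoids the case-tracking of whether $c_n\leq 1/2$, but your version makes the role of the invariance $h_\mu((1/2,1])\subseteq[1/2,1]$ more transparent.
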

\begin{proof}
	Suppose to the contrary that $h_\mu(\mu) = 1$. Then
	\[
		1 = h_\mu(\mu) = 1 - \tfrac{1}{2} h_\mu(2\mu(1-\mu)),
	\]
	so $h_\mu(2\mu(1-\mu)) = 0$. Also, we have
	\[
		h_\mu(\tfrac{1}{2}) = \tfrac{1}{2} h_\mu(2\mu \cdot \tfrac{1}{2}) = \tfrac{1}{2}h_\mu(\mu) = \tfrac{1}{2}
	\]
	and
	\[
		h_\mu(\tfrac{1}{4\mu}) = \tfrac{1}{2} h_\mu(2\mu \cdot \tfrac{1}{4\mu}) = \tfrac{1}{2} h_\mu(\tfrac{1}{2}) = \tfrac{1}{4}.
	\]
	Continuing, we see that
	\[
		h_\mu(\tfrac{1}{2^n \mu^{n-1}}) = \tfrac{1}{2^n}
	\]
	in general. Choose $n$ sufficiently large to ensure that
	\[
		0 < \frac{1}{2^n \mu^{n-1}} < 2\mu(1-\mu).
	\]
	Then
	\[
		0 = h_\mu(0) < h_\mu(\tfrac{1}{2^n \mu^{n-1}}) > h_\mu(2\mu(1-\mu)) = 0,
	\]
	contradicting the fact that $h_\mu$ is monotone increasing. Therefore, we must have $h_\mu(\mu) \neq 1$.
\end{proof}

\begin{lem}
	If $1/2 < \mu < 1$, then $h_\mu$ has a jump discontinuity at $x=1/2$.
\end{lem}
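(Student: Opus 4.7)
The plan is to directly compute $h_\mu(1/2)$ and the right-hand limit $\lim_{x \to 1/2^+} h_\mu(x)$ using the functional equation \eqref{eq:functional}, and show that they differ by a positive amount.

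First, applying \eqref{eq:functional} at $x = 1/2$ (which falls in the first branch) gives
\[
    h_\mu(1/2) = \tfrac{1}{2} h_\mu(\mu).
\]
Next, for $x > 1/2$ the functional equation says $h_\mu(x) = 1 - \tfrac{1}{2} h_\mu(2\mu(1-x))$, and as $x \to 1/2^+$ the argument $2\mu(1-x)$ approaches $\mu$ from the left. Since $h_\mu$ is monotone increasing (by the previous lemma), both one-sided limits exist, and I would write
\[
    \lim_{x \to 1/2^+} h_\mu(x) = 1 - \tfrac{1}{2} \lim_{y \to \mu^-} h_\mu(y).
\]
Again by monotonicity, $\lim_{y \to \mu^-} h_\mu(y) \leq h_\mu(\mu)$, which yields the lower bound
\[
    \lim_{x \to 1/2^+} h_\mu(x) \geq 1 - \tfrac{1}{2} h_\mu(\mu).
\]

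Combining the two computations, the size of the jump is
\[
    \lim_{x \to 1/2^+} h_\mu(x) - h_\mu(1/2) \geq \left(1 - \tfrac{1}{2} h_\mu(\mu)\right) - \tfrac{1}{2} h_\mu(\mu) = 1 - h_\mu(\mu).
\]
Lemma~\ref{lem:hmu not 1} gives $h_\mu(\mu) \neq 1$, and since $h_\mu$ takes values in $[0,1]$ we have $h_\mu(\mu) < 1$, so the right-hand side is strictly positive. Hence $h_\mu$ has a jump discontinuity at $1/2$ of size at least $1 - h_\mu(\mu)$. There is no real obstacle here; the only subtle point is matching a right-hand limit at $1/2$ to a left-hand limit at $\mu$, which is immediate once one notices that $x \mapsto 2\mu(1-x)$ is decreasing and sends $1/2^+$ to $\mu^-$.
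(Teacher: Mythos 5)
Your proof is correct and follows essentially the same route as the paper: both combine the functional equation at $x=1/2$, the monotonicity of $h_\mu$, and Lemma~\ref{lem:hmu not 1} ruling out $h_\mu(\mu)=1$. Your version is a slightly more direct computation (and as a bonus yields the explicit lower bound $1 - h_\mu(\mu)$ on the jump size), whereas the paper phrases the same argument as a short contradiction.
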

\begin{proof}
	Put $a = h_\mu(\tfrac{1}{2})$. Since $h_\mu$ is increasing,
	\[
		\lim_{x \to \frac{1}{2}^-} h_\mu(x) \leq a.
	\]
	The functional equation then implies that
	\[
		\lim_{x \to \frac{1}{2}^+} h_\mu(x) = 1-a.
	\]
	Thus $h_\mu$ is continuous at $x=1/2$ if and only if $a = 1/2$. But if $h_\mu(\tfrac{1}{2}) = \tfrac{1}{2}$, then
	\[
		h_\mu(\mu) = h_\mu(2\mu \cdot \tfrac{1}{2}) = 2 h_\mu(\tfrac{1}{2}) = 1,
	\]
	which is impossible by Lemma \ref{lem:hmu not 1}. Thus $h_\mu$ is discontinuous at $x=1/2$.
\end{proof}

The next lemma shows that $h_\mu$ has jump discontinuities corresponding to the peaks of all the iterates of $T_\mu$. More precisely, we claim that $h_\mu$
is discontinuous at any point where $T_\mu^n$ attains a local maximum for some $n$. Since $T_\mu^n$ is piecewise monotone (indeed, piecewise linear), the local 
extrema occur precisely at the points where $T_\mu^n$ is not differentiable. These are exactly the points $x_0$ for which $T_\mu^{n-1}(x_0)=1/2$ or $T_\mu^{n-1}$ is 
not differentiable at $x_0$. Inductively, these points are just the preimages of $1/2$ under the maps $T_\mu^0, T_\mu, T_\mu^2, \ldots, T_\mu^{n-1}$.

\begin{lem} 
\label{lem: jump disc}
	Suppose $1/2 < \mu < 1$, and let $x_0 \in [0,1]$. If there exists $n \geq 0$ such that $T_\mu^n(x_0)=1/2$, then $h_\mu$ is discontinuous at $x_0$.
\end{lem}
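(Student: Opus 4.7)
The plan is to induct on $n$. The base case $n = 0$ is immediate from the preceding lemma, since $T_\mu^0(x_0) = x_0 = 1/2$. For the inductive step, assume the result holds for $n-1$, and fix $x_0$ with $T_\mu^n(x_0) = 1/2$. Note that $x_0 \notin \{0,1\}$: the orbits of $0$ and $1$ are $\{0\}$ and $\{1, 0, 0, \ldots\}$ respectively, neither of which visits $1/2$. If $x_0 = 1/2$, the previous lemma already gives discontinuity. Otherwise, set $y_0 = T_\mu(x_0)$; since $T_\mu$ maps $[0,1]$ onto $[0,\mu]$ and $y_0 = \mu$ would force $x_0 = 1/2$, we have $y_0 \in (0, \mu)$, and $T_\mu^{n-1}(y_0) = 1/2$. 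By the inductive hypothesis, $h_\mu$ is discontinuous at $y_0$, and since $h_\mu$ is monotone increasing, its one-sided limits at $y_0$ exist with $\lim_{y \to y_0^-} h_\mu(y) < \lim_{y \to y_0^+} h_\mu(y)$.

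The remaining step is to pull the jump at $y_0$ back to a jump at $x_0$ using the functional equation. Two cases arise. If $x_0 \in [0, 1/2)$, choose a neighborhood $U$ of $x_0$ contained in $[0, 1/2)$; on $U$ we have $h_\mu(x) = \tfrac{1}{2} h_\mu(2\mu x)$, and the map $x \mapsto 2\mu x$ is an order-preserving homeomorphism from $U$ onto a neighborhood of $y_0$ inside $[0, \mu)$. Passing to one-sided limits gives $\lim_{x \to x_0^{\pm}} h_\mu(x) = \tfrac{1}{2} \lim_{y \to y_0^{\pm}} h_\mu(y)$, so the jump at $y_0$ transfers (halved) to a jump at $x_0$. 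If instead $x_0 \in (1/2, 1]$, then on a neighborhood of $x_0$ inside $(1/2, 1]$ we have $h_\mu(x) = 1 - \tfrac{1}{2} h_\mu(2\mu(1-x))$, and the pullback $x \mapsto 2\mu(1-x)$ is an order-reversing homeomorphism onto a neighborhood of $y_0$. This yields $\lim_{x \to x_0^-} h_\mu(x) = 1 - \tfrac{1}{2} \lim_{y \to y_0^+} h_\mu(y)$ and $\lim_{x \to x_0^+} h_\mu(x) = 1 - \tfrac{1}{2} \lim_{y \to y_0^-} h_\mu(y)$, so the strict inequality at $y_0$ again produces a strict inequality at $x_0$.

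There is no serious obstacle, only bookkeeping: the key point is that after discarding the already-handled case $x_0 = 1/2$ (and the impossible cases $x_0 \in \{0,1\}$), the pullback point $y_0$ lies in the open interval $(0, \mu)$, so a sufficiently small neighborhood of $y_0$ sits inside the image of a single branch of the functional equation and the limit computations above are valid. This closes the induction.
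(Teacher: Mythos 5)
Your proof is correct, but it takes a genuinely different route from the paper's. The paper applies the iterated commutation relation $T^n \circ h_\mu = h_\mu \circ T_\mu^n$ in a single shot: taking one-sided limits of both sides of this identity as $x \to x_0^{\pm}$ and using continuity of $T^n$, it concludes that $T^n$ sends the left and right limits of $h_\mu$ at $x_0$ to the left and right limits of $h_\mu$ at $1/2$, which are distinct, hence the limits at $x_0$ are distinct. Your proof instead inducts on $n$, using the functional equation (the one-step commutation relation, restricted to a single linear branch of $T_\mu$) to transfer the jump from $y_0 = T_\mu(x_0)$ back to $x_0$, carefully tracking the orientation of the branch map. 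The paper's argument is shorter and avoids induction, but it implicitly assumes $T_\mu^n$ is locally monotone near $x_0$ (true if $n$ is taken minimal) and glosses over the possibility that $T_\mu^n$ is orientation-reversing there, in which case the one-sided limits at $1/2$ get swapped (this doesn't affect the conclusion, but the chain of equalities as written requires an orientation assumption). Your step-by-step approach makes the orientation bookkeeping explicit and the correctness more transparent, at the cost of the preliminary remarks about excluding $x_0 \in \{0, 1\}$ and $x_0 = 1/2$, and a slightly longer argument overall. Both are valid; yours is arguably the more careful of the two.
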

\begin{proof}
	Let $n$ be an integer for which $T_\mu^n(x_0) = 1/2$. Note first that
	\[
		T^n(h_\mu(x_0)) = h_\mu(T_\mu^n(x_0)) = h_\mu(\tfrac{1}{2}).
	\]
	Now observe that
	\[
		T^n \Bigl( \lim_{x \to x_0^-} h_\mu(x) \Bigr) = \lim_{x \to x_0^-} T^n(h_\mu(x)) = \lim_{x \to x_0^-} h_\mu(T_\mu^n(x)) = \lim_{x \to \frac{1}{2}^-} h_\mu(x),
	\]
	and similarly,
	\[
		T^n \Bigl( \lim_{x \to x_0^+} h_\mu(x) \Bigr) = \lim_{x \to x_0^+} T^n(h_\mu(x)) = \lim_{x \to x_0^+} h_\mu(T_\mu^n(x)) = \lim_{x \to \frac{1}{2}^+} h_\mu(x).
	\]
	Since $h_\mu$ has a jump discontinuity at $1/2$, it follows that
	\[
		\lim_{x \to x_0^-} h_\mu(x) \neq  \lim_{x \to x_0^+} h_\mu(x).
	\]
	Thus $h_\mu$ has a jump discontinuity at $x_0$.
\end{proof}

\begin{lem}\label{lem: x0 btwn x and y}
	If $x, y \in [0,1]$ with $x < y$, then there exists $x < x_0 < y$ such that $T_\mu^n(x_0) = 1/2$ for some $n \geq 0$.
\end{lem}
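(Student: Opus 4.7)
The plan is to prove the contrapositive version: if no point $x_0 \in (x,y)$ satisfies $T_\mu^n(x_0) = 1/2$ for any $n \geq 0$, then $(x,y)$ cannot exist in $[0,1]$ at all, because the iterates $T_\mu^n$ will stretch $(x,y)$ beyond length $1$. The key leverage here is the hypothesis $\mu > 1/2$, which ensures $2\mu > 1$, so the slope $(2\mu)^n$ of the iterated tent map tends to infinity.

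First I would dispose of the trivial case $1/2 \in (x,y)$ by taking $x_0 = 1/2$ and $n = 0$. So assume $1/2 \notin (x,y)$ and, for contradiction, that no point of $(x,y)$ lands on $1/2$ under any iterate $T_\mu^k$. I would then establish by induction that $T_\mu^n$ is linear on $(x,y)$ with slope of absolute value $(2\mu)^n$ for every $n \geq 0$. The base case is trivial, and for the inductive step: if $T_\mu^n$ is linear on $(x,y)$, then $T_\mu^n((x,y))$ is an open interval not containing $1/2$ (by assumption), so it lies in $[0,1/2)$ or $(1/2,1]$, on each of which $T_\mu$ is linear with slope $\pm 2\mu$. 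Hence $T_\mu^{n+1} = T_\mu \circ T_\mu^n$ is linear on $(x,y)$ with slope $\pm (2\mu)^{n+1}$.

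Consequently the image $T_\mu^n((x,y))$ is an open interval of length $(2\mu)^n (y - x)$. On the other hand, the same assumption forces $T_\mu^n((x,y))$ to lie entirely in $[0,1/2]$ or $[1/2,1]$, each of which has length $1/2$. Thus
\[
	(2\mu)^n (y - x) \leq \tfrac{1}{2}
\]
for every $n$, contradicting $2\mu > 1$ once $n$ is large enough. This contradiction shows that some iterate must send a point of $(x,y)$ to $1/2$, yielding the desired $x_0$.

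The only step that requires any care is the inductive verification that linearity of $T_\mu^n$ on $(x,y)$ is preserved; everything else is an immediate consequence of $2\mu > 1$ and the structure of $T_\mu$. I do not anticipate a real obstacle — the proof is essentially the standard expansion argument for uniformly expanding piecewise linear maps.
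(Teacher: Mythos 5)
Your proof is correct and follows essentially the same expansion argument as the paper: iterate $T_\mu$, noting that as long as $1/2$ is never hit, the interval is stretched by a factor of $2\mu > 1$ at each step, and since the iterated image must stay inside a half-interval of length $1/2$, this cannot persist for all $n$. The paper phrases this as a direct construction (apply $T_\mu$ until $1/2$ enters the image, with an explicit bound $n = \lceil -\log(2\delta)/\log(2\mu) \rceil$ on when this must happen), whereas you phrase it as a contradiction with an explicit induction establishing the linearity of $T_\mu^n$ on $(x,y)$; these are cosmetically different presentations of the same idea, and your version is if anything a bit more careful about the edge cases.
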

\begin{proof}
	Put $\delta = \abs{x-y}$. Assume first that $x < y \leq 1/2$. Since $T_\mu$ is continuous and strictly increasing on $[0, 1/2]$, $T_\mu((x,y)) = (2\mu x, 2\mu y)$. 
	Similarly, if $1/2 < x < y$, then $T_\mu((x,y)) = (2\mu(1-y), 2\mu(1-x))$. In either case, $T_\mu$ stretches $(x,y)$ by a factor of $2\mu$ (since $\mu > 1/2$). If 
	$T_\mu((x,y))$ is contained entirely within either $[0,1/2]$ or $[1/2,1]$, apply $T_\mu$ again, which stretches the interval by another factor of $2\mu$. Repeat 
	until $1/2 \in T_\mu^n((x,y))$. This process is guaranteed to terminate before $n = \lceil -\log(2\delta)/\log(2\mu) \rceil$. Indeed, if $1/2 \not\in T_\mu^k((x,y))$ for 
	$k < n = \lceil -\log(2\delta)/\log(2\mu) \rceil$, then $T_\mu^n((x,y))$ is guaranteed to have length 
	\[
		(2\mu)^n\delta > (2\mu)^{-\log(2\delta)/\log(2\mu)} \delta = e^{-\log(2\delta)} \delta = \tfrac{1}{2},
	\]
	which forces $1/2 \in T_\mu^n((x,y))$. Thus there exists $x_0 \in (x,y)$ such that $T_\mu^n(x_0)=1/2$ for some $n \geq 1$.
\end{proof}

\begin{cor}\label{cor: jump disc}
	Given two points $x, y \in [0,1]$ with $x < y$, there exists $x_0 \in (x,y)$ such that $h_\mu$ is discontinuous at $x_0$.
\end{cor}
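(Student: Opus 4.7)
The statement is an immediate consequence of the two preceding lemmas, and my plan is just to chain them together. By Lemma \ref{lem: x0 btwn x and y}, any open subinterval $(x,y) \subseteq [0,1]$ contains a point $x_0$ for which $T_\mu^n(x_0) = 1/2$ for some nonnegative integer $n$. Lemma \ref{lem: jump disc} then tells us that $h_\mu$ has a jump discontinuity at precisely such a point $x_0$. Concatenating these two facts delivers exactly the statement of the corollary.

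Thus the proof should be no more than two or three sentences: fix $x < y$ in $[0,1]$, invoke Lemma \ref{lem: x0 btwn x and y} to obtain $x_0 \in (x,y)$ and $n \geq 0$ with $T_\mu^n(x_0) = 1/2$, and then apply Lemma \ref{lem: jump disc} to conclude that $h_\mu$ is discontinuous at $x_0$. There is no real obstacle to address here, since the interesting content of the corollary has already been absorbed into the two lemmas it depends on. If anything, the only remark worth making is that the corollary shows the set of discontinuities of $h_\mu$ is \emph{dense} in $[0,1]$, which is the qualitative consequence the authors are evidently after for later use in describing the range of $h_\mu$.
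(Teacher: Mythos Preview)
Your proposal is correct and essentially identical to the paper's own proof: the paper simply cites Lemma~\ref{lem: x0 btwn x and y} to produce $x_0 \in (x,y)$ with $T_\mu^n(x_0)=1/2$, and then Lemma~\ref{lem: jump disc} to conclude $h_\mu$ is discontinuous there. Your added remark about density is accurate but not part of the paper's proof of this corollary.
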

\begin{proof}
	We have just shown in Lemma \ref{lem: x0 btwn x and y} that between any two points $x, y \in [0,1]$, we can find a point $x_0 \in (x,y)$ such that $T_\mu^n(x_0)=1/2$ for some $n$. But we have also
	shown in Lemma \ref{lem: jump disc} that $h_\mu$ has a jump discontinuity at any such point.
\end{proof}

\begin{thm}\label{thmInc}
	The function $h_\mu$ is strictly increasing on $[0,1]$.
\end{thm}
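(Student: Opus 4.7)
The plan is to argue by contradiction, leveraging the two ingredients already established: monotonicity of $h_\mu$ and the density of its jump discontinuities.

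Suppose $h_\mu$ fails to be strictly increasing. Since the previously proved lemma guarantees $h_\mu$ is monotone increasing, the failure must take the form of two points $x < y$ in $[0,1]$ with $h_\mu(x) = h_\mu(y)$. By monotonicity, the sandwich $h_\mu(x) \leq h_\mu(z) \leq h_\mu(y) = h_\mu(x)$ forces $h_\mu$ to be constant on the entire interval $[x,y]$.

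Now apply Corollary \ref{cor: jump disc} to the pair $x < y$ to obtain a point $x_0 \in (x,y)$ at which $h_\mu$ has a jump discontinuity. But $h_\mu$ is constant on the open neighborhood $(x,y)$ of $x_0$, so it is certainly continuous there — a direct contradiction.

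The only potential subtlety is confirming that the hypotheses of the corollary apply on the whole of $[0,1]$ for the range of $\mu$ we care about; Lemma \ref{lem: x0 btwn x and y} was proved under $\mu > 1/2$, which matches the standing assumption on $T_\mu$, so no additional work is needed. The argument is essentially a one-liner once the earlier machinery (the jump discontinuity at $1/2$, its propagation to preimages of $1/2$ under iterates of $T_\mu$, and the density of those preimages coming from the expansion factor $2\mu > 1$) is in place; no real obstacle remains at this step.
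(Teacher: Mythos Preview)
Your proof is correct and uses essentially the same approach as the paper: both arguments combine the already-established monotonicity of $h_\mu$ with Corollary~\ref{cor: jump disc} to produce a jump discontinuity between any two points. The only cosmetic difference is that the paper argues directly via the chain $h_\mu(x) \leq \lim_{t \to x_0^-} h_\mu(t) < \lim_{t \to x_0^+} h_\mu(t) \leq h_\mu(y)$, whereas you frame it as a contradiction with constancy on $[x,y]$.
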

\begin{proof}
	Let $x, y \in [0,1]$ with $x < y$. By Corollary \ref{cor: jump disc}, there is a point $x_0$ between $x$ and $y$ at which $h_\mu$ has a jump discontinuity. Since 
	$h_\mu$ is increasing, we have
	\[
		h_\mu(x) \leq \lim_{t \to x_0^-} h_\mu(t) < \lim_{t \to x_0^+} h_\mu(t) \leq h_\mu(y),
	\]
	so $h_\mu$ is indeed strictly increasing.
\end{proof}

We close this section with a useful result about the family $\{h_\mu\}$ of commuters for $1/2 < \mu < 1$. One would expect that the functions $h_\mu$ 
should approach the identity function $h(x)=x$ as $\mu \to 1$, at least pointwise. In fact, we prove that $h_\mu \to h$ \emph{uniformly} as $\mu \to 1$. 

Recall that we established the existence of $h_\mu$ by defining it to be the unique fixed point of the contraction $M_\mu : \F \to \F$.
Not only is each $M_\mu$ contractive, but the one-parameter family $\{M_\mu\}_{\frac{1}{2} < \mu < 1}$ is \emph{uniformly contractive} in the sense that
\[
	\norm{M_\mu f - M_\mu g}_\infty \leq \alpha \norm{f-g}_\infty
\]
for all $f, g \in \F$, where $\alpha$ is a constant that is independent of $\mu$. In particular, we can take $\alpha = 1/2$. Also, notice that for $\mu=1$ the contraction 
$M := M_1$ takes the form
\[
	M f(x) = \left\{ \begin{array}{cc}
		\frac{1}{2} f(2 x) & \text{ if } 0 \leq x \leq 1/2 \\
		1-\frac{1}{2}f(2 (1-x)) & \text{ if } 1/2 < x \leq 1
	\end{array}
	\right.
\]
and the identity function $h(x) = x$ is the unique fixed point of $M$. With these facts in hand, we are now in a position to invoke the Uniform Contraction Principle 
of \cite{stuart-humph} to see that $h_\mu \to h$ uniformly.

\begin{thm}
\label{thm:unifcont}
	As $\mu \to 1$, the one-parameter family $\{h_\mu\}$ converges uniformly to the identity function $h : [0,1] \to [0,1]$.
\end{thm}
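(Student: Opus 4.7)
The plan is to mimic the standard proof of the Uniform Contraction Principle: express $\|h_\mu - h\|_\infty$ via the triangle inequality around the fixed point of the limit operator $M = M_1$, exploit uniform contractivity to absorb one term, and then verify directly that the remaining residual tends to $0$ as $\mu \to 1$.

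First I would start from the observation that $h = M h$ (since the identity is the fixed point of $M$) and $h_\mu = M_\mu h_\mu$. Inserting $M_\mu h$ as an intermediate term gives
\[
	\|h_\mu - h\|_\infty = \|M_\mu h_\mu - M h\|_\infty \leq \|M_\mu h_\mu - M_\mu h\|_\infty + \|M_\mu h - M h\|_\infty.
\]
The uniform contractivity bound with constant $\alpha = 1/2$ applied to the first term on the right yields
\[
	\|h_\mu - h\|_\infty \leq \tfrac{1}{2} \|h_\mu - h\|_\infty + \|M_\mu h - M h\|_\infty,
\]
and rearranging gives
\[
	\|h_\mu - h\|_\infty \leq 2 \|M_\mu h - M h\|_\infty.
\]
So the theorem reduces to showing that $M_\mu h \to M h$ uniformly on $[0,1]$ as $\mu \to 1$.

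The remaining step is a direct computation using \eqref{eq:operator} with $h(x) = x$. For $x \in [0,1/2]$ one has $M_\mu h(x) = \mu x$ and $M h(x) = x$, giving $|M_\mu h(x) - M h(x)| = (1-\mu) x \leq (1-\mu)/2$. For $x \in (1/2,1]$ one has $M_\mu h(x) = 1 - \mu(1-x)$ and $M h(x) = x$, giving $|M_\mu h(x) - M h(x)| = (1-\mu)(1-x) \leq (1-\mu)/2$. Hence $\|M_\mu h - M h\|_\infty \leq (1-\mu)/2$, and combining with the previous display yields $\|h_\mu - h\|_\infty \leq 1-\mu \to 0$.

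There is essentially no obstacle here. The only conceptual subtlety worth flagging is that one should \emph{not} attempt to prove $M_\mu f \to M f$ uniformly for arbitrary $f \in \F$: since $\F$ contains highly discontinuous functions (and indeed the $h_\mu$ themselves are densely discontinuous by Corollary \ref{cor: jump disc}), the map $\mu \mapsto M_\mu f$ need not be continuous at $\mu = 1$ in general. The virtue of the contraction-principle packaging is precisely that one only needs the convergence $M_\mu g \to M g$ at the single continuous function $g = h$, where it is an elementary linear estimate.
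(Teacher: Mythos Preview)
Your proof is correct and takes essentially the same approach as the paper: the paper invokes the Uniform Contraction Principle as a black-box citation, while you spell out its two-line proof via the triangle inequality and absorption, but both compute the identical residual $\|M_\mu h - h\|_\infty \leq (1-\mu)/2$ and arrive at the same final estimate $\|h_\mu - h\|_\infty \leq 1-\mu$.
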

\begin{proof}
	We have already seen that the family $\{M_\mu\}_{\tfrac{1}{2} < \mu \leq 1}$ is uniformly contractive with contraction constant $\alpha = 1/2$. Now we claim that for 
	each $\mu$,
	\[
		\norm{M_\mu h - h} \leq \frac{1-\mu}{2}. 
	\]
	If $x \in [0, \frac{1}{2}]$, then
	\[
		\abs{M_\mu h(x) - h(x)} = \abs{\tfrac{1}{2} h(2\mu x) - h(x)} = \abs{\tfrac{1}{2} \cdot 2 \mu x - x} = \abs{(\mu - 1)x} \leq \frac{1-\mu}{2}.
	\]
	Likewise, if $x \in (\frac{1}{2},1]$, then
	\begin{align*}
		\abs{M_\mu h(x) - h(x)} &= \abs{1-\tfrac{1}{2} h(2\mu(1- x)) - h(x)} \\
			&= \abs{1-\mu+\mu x - x} \\
			&= \abs{(1-\mu)(1-x)} \\
			&\leq \frac{1-\mu}{2}.
	\end{align*}
	The Uniform Contraction Principle \cite[Theorem C.5]{stuart-humph} now guarantees that 
	\[
		\norm{h_\mu - h}_\infty \leq \frac{1-\mu}{2} \cdot \frac{1}{1-\tfrac{1}{2}} = 1-\mu.
	\]
	From this it is clear that $h_\mu \to h$ uniformly as $\mu \to 1$.
\end{proof}


\section{The Range of $h_\mu$}\label{secRange}
It is particularly interesting to study the range of the map $h_\mu$ since we can see from the proof of Theorem \ref{thm:semiconjugate_patterns} that the allowed permutations realized by the map $T_\mu$ are exactly $$\{ \pi \mid \pi = \Pat(x,T,n\} \text{ for } x \in \text{Range}(h_\mu)\} \subseteq \Allow_n(T).$$ 

Based on the pictures above, it appears that the range of $h_\mu$ is a Cantor-like set. In particular, it looks as though the gap at $x=1/2$ is replicated at smaller and 
smaller scales throughout the range of $h_\mu$. Indeed, we have already seen that this jump discontinuity is replicated at precisely the points where the peaks of the 
iterates of $T_\mu$ occur. We aim to show here that the gaps in the range consist of a union of intervals centered at dyadic rationals, each with radius proportional to 
that of the gap at $x=1/2$.

We begin by observing that the range of $h_\mu$ must exclude any point $y$ for which $T(y) > h_\mu(\mu)$. This is due to the commutation relationship
\[
	h_\mu \circ T_\mu = T \circ h_\mu.
\]
Since the maximum of $T_\mu$ is $\mu$, the possible values of the left side are at most $h_\mu(\mu)$. The standard tent map $T$ takes values greater than 
$h_\mu(\mu)$ whenever $x$ is between $h_\mu(\mu)/2$ and $1-h_\mu(\mu)/2$, so the interval
\[
	\left( \frac{h_\mu(\mu)}{2}, 1 - \frac{h_\mu(\mu)}{2} \right)
\]
is omitted from the range of $h_\mu$. We also have
\[
	h_\mu \circ T_\mu^2 = T^2 \circ h_\mu,
\]
so $h_\mu$ can never take values in the set $(T^2)^{-1}((h_\mu(\mu),1])$. Thus
\[
	\left( \frac{h_\mu(\mu)}{4}, \frac{1}{2} - \frac{h_\mu(\mu)}{4} \right) \cup \left( \frac{1}{2} + \frac{h_\mu(\mu)}{4}, 1-\frac{h_\mu(\mu)}{4} \right)
\]
is excluded from the range of $h_\mu$. In general, $h_\mu$ cannot take values that would make $T^n$ greater than $h_\mu(\mu)$. We prove below that this occurs 
on the set
\[
	\bigcup_{i=1}^{2^{n-1}} \left( \frac{2i-1}{2^n} - \frac{1-h_\mu(\mu)}{2^n}, \frac{2i-1}{2^n} + \frac{1-h_\mu(\mu)}{2^n} \right).
\]

\begin{prop}
\label{prop:range}
	The set
	\[
		\bigcup_{n=1}^\infty \bigcup_{i=1}^{2^{n-1}} \left( \frac{2i-1}{2^n} - \frac{1-h_\mu(\mu)}{2^n}, \frac{2i-1}{2^n} + \frac{1-h_\mu(\mu)}{2^n} \right)
	\]
	does not belong to the range of $h_\mu$.
\end{prop}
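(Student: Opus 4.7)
The plan is to combine the iterated commutation relation with a direct description of the super-level sets of $T^n$. First I will show by a short induction on $n$ that
\[
T^n \circ h_\mu = h_\mu \circ T_\mu^n
\]
for every $n \geq 1$. The base case $n=1$ is the commutation relation, and the inductive step just applies $T$ on the left of both sides and uses the $n=1$ relation inside.

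Next I exploit the fact that $T_\mu$ takes values in $[0,\mu]$, so $T_\mu^n(x) \leq \mu$ for all $x \in [0,1]$ and all $n \geq 1$. Since $h_\mu$ is monotone increasing (Theorem \ref{thmInc}), this yields
\[
T^n(h_\mu(x)) \;=\; h_\mu(T_\mu^n(x)) \;\leq\; h_\mu(\mu)
\]
for every $x \in [0,1]$ and every $n \geq 1$. Hence if $y$ lies in the range of $h_\mu$, then $T^n(y) \leq h_\mu(\mu)$ for all $n \geq 1$, so the range of $h_\mu$ is disjoint from
\[
U_n \;:=\; \{\, y \in [0,1] : T^n(y) > h_\mu(\mu)\,\}
\]
for every $n$. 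It therefore suffices to show that $U_n$ equals
\[
\bigcup_{i=1}^{2^{n-1}} \left( \frac{2i-1}{2^n} - \frac{1-h_\mu(\mu)}{2^n},\; \frac{2i-1}{2^n} + \frac{1-h_\mu(\mu)}{2^n} \right).
\]

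The bulk of the argument is this last identification, which is a straightforward but somewhat bookkeeping-heavy claim about the standard tent map. I would prove by induction on $n$ that $T^n$ is piecewise linear on $[0,1]$ with $2^n$ monotone pieces of slope $\pm 2^n$, whose local maxima occur exactly at the odd multiples $(2i-1)/2^n$ for $i = 1,\ldots,2^{n-1}$ and equal $1$ there. Given this structure, on each such peak the equation $T^n(y) = h_\mu(\mu)$ has two solutions at distance $(1-h_\mu(\mu))/2^n$ from the peak (because the slope is $\pm 2^n$), so $T^n(y) > h_\mu(\mu)$ on the symmetric open interval of radius $(1-h_\mu(\mu))/2^n$ around $(2i-1)/2^n$. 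Taking the union over $i$ describes $U_n$, and taking the union over $n$ gives the set in the statement.

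The only real obstacle is keeping the inductive description of the shape of $T^n$ clean; once that is available the rest of the argument is immediate from the iterated commutation relation together with the monotonicity of $h_\mu$ and the bound $T_\mu^n \leq \mu$.
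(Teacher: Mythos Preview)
Your proposal is correct and follows essentially the same route as the paper: the paper also argues (in the paragraph preceding the proposition) that $h_\mu \circ T_\mu^n = T^n \circ h_\mu$ together with $T_\mu^n \leq \mu$ and the monotonicity of $h_\mu$ forces the range of $h_\mu$ to miss $\{y : T^n(y) > h_\mu(\mu)\}$, and then the proof proper computes this super-level set as the stated union of intervals around the odd dyadic points. The only cosmetic difference is that the paper computes the first interval directly from $T^n(x) = 2^n x$ on $[0,1/2^n]$ and translates, whereas you outline an inductive description of the piecewise-linear structure of $T^n$; either way the content is the same.
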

\begin{proof}
	First recall that for each $n \geq 1$, the peaks of $T^n$ (i.e., the points where $T^n(x) = 1$) occur at the dyadic points $x = \frac{2i-1}{2^n}$ for $1 \leq i \leq 2^{n-1}$.
	On the interval $[0, \frac{1}{2^n}]$ we have
	\[
		T^n(x) = 2^n x,
	\]
	so $T^n(x) > h_\mu(\mu)$ when $\frac{h_\mu(\mu)}{2^n} < x \leq \frac{1}{2^n}$. Similarly, on the interval $[\frac{1}{2^n}, \frac{1}{2^{n-1}}]$
	\[
		T^n(x) = 2^n(\tfrac{1}{2^{n-1}}-x),
	\]
	so $T^n(x) > h_\mu(\mu)$ when $\frac{1}{2^n} \leq x < \frac{1}{2^{n-1}} - \frac{h_\mu(\mu)}{2^n}$. Therefore, $T^n(x) > h_\mu(\mu)$ for all $x$ in the interval
	\[
		\left( \frac{h_\mu(\mu)}{2^n}, \frac{1}{2^{n-1}} - \frac{h_\mu(\mu)}{2^n} \right).
	\]
	This interval is symmetric about $\frac{1}{2^n}$, which we can make more evident by rewriting it as
	\[
		\left( \frac{1}{2^n} - \frac{1 - h_\mu(\mu)}{2^n}, \frac{1}{2^{n}} + \frac{1-h_\mu(\mu)}{2^n} \right).
	\]
	We can obtain the intervals around the other peaks by simply translating. That is, we have $T^n(x) > h_\mu(\mu)$ for all $x$ in the intervals
	\[
		\left( \frac{2i-1}{2^n} - \frac{1 - h_\mu(\mu)}{2^n}, \frac{2i-1}{2^{n}} + \frac{1-h_\mu(\mu)}{2^n} \right), \quad 1 \leq i \leq 2^{n-1}.
	\]
	Thus none of these intervals can belong to the range of $h_\mu$. Taking the union over $1 \leq i \leq 2^{n-1}$ and over all $n \geq 1$ yields the desired result.
\end{proof}

\section{Allowed and Forbidden Patterns}\label{secAllowed}
Here, we study the relationship between the allowed and forbidden patterns of $T_\mu$ and $T$, starting with the following theorem which tells us that any pattern realized by $T_\mu$ for $1/2<\mu\leq 1$ must also be realized by $T$.

\begin{thm}\label{thm:main}
	Suppose $\pi \in \allow(T_\mu)$, where $1/2 < \mu \leq 1$. Then $\pi \in \allow(T)$.
\end{thm}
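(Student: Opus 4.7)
The plan is to observe that this theorem is an immediate consequence of the machinery built up in Section \ref{secCommuter}. All the heavy lifting has already been done: the strictly increasing commuter $h_\mu$ between $T_\mu$ and $T$ exists (as the fixed point of the contraction $M_\mu$), and Theorem \ref{thm:semiconjugate_patterns} already packaged the implication ``strictly increasing commuter implies containment of allowed patterns'' in full generality.

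Concretely, I would proceed in two short steps. First, invoke Theorem \ref{thmInc} to know that $h_\mu : [0,1] \to [0,1]$ is strictly increasing, and recall from the construction that $T \circ h_\mu = h_\mu \circ T_\mu$, i.e., $h_\mu$ is a commuter for $T_\mu$ and $T$. Second, apply Theorem \ref{thm:semiconjugate_patterns} with $f = T$, $g = T_\mu$, and $h = h_\mu$, which immediately yields $\Allow_n(T_\mu) \subseteq \Allow_n(T)$ for every $n \geq 1$. Taking the union over $n$ gives $\Allow(T_\mu) \subseteq \Allow(T)$, which is exactly what is claimed.

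For concreteness, one can rehearse the one-line argument from Theorem \ref{thm:semiconjugate_patterns} in this specific setting: if $\pi \in \Allow_n(T_\mu)$ is realized at some $x \in [0,1]$, so that
\[
x,\ T_\mu(x),\ T_\mu^2(x),\ \ldots,\ T_\mu^{n-1}(x)
\]
has the same relative order as $\pi_1 \pi_2 \cdots \pi_n$, then applying the strictly increasing map $h_\mu$ preserves the order, and the commutation relation iterates to give $h_\mu(T_\mu^k(x)) = T^k(h_\mu(x))$ for every $k$, so the sequence $h_\mu(x), T(h_\mu(x)), \ldots, T^{n-1}(h_\mu(x))$ realizes $\pi$ under $T$ at the point $h_\mu(x)$.

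There is no real obstacle at this stage; the theorem is essentially a corollary that advertises the payoff of the commuter construction. The genuine work was in Section \ref{secCommuter}: showing that $h_\mu$ exists (Contraction Mapping Theorem applied to $M_\mu$) and, crucially, showing that it is strictly increasing despite being highly discontinuous — which required the sequence of lemmas establishing monotonicity of the iterates $f_n$, the fact that $h_\mu(\mu) \neq 1$, and the density of jump discontinuities arising from preimages of $1/2$ under iterates of $T_\mu$.
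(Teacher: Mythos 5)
Your proposal is correct and takes exactly the same approach as the paper: cite Theorem \ref{thmInc} for the strict monotonicity of $h_\mu$, recall the commutation relation $T \circ h_\mu = h_\mu \circ T_\mu$, and apply Theorem \ref{thm:semiconjugate_patterns} with $f = T$, $g = T_\mu$, $h = h_\mu$. The paper's proof is a two-line version of this; your restatement of the underlying order-preservation argument is accurate but not needed.
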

\begin{proof}
	Since $h_\mu$ is increasing by Theorem \ref{thmInc}, we could take $f = T$, $g = T_\mu$ and $h = h_\mu$ in Theorem \ref{thm:semiconjugate_patterns}. The result follows. 
	\end{proof}

Now we set out to investigate the length of the shortest pattern allowed for the full tent map $T$ but forbidden for $T_\mu$. This requires us to 
more closely analyze the behavior of $T$ and its iterates near $x=1/2$. Consequently, we show that the pattern of length $n$ realized at points 
near $1/2$ always has a very particular form. Moreover, this pattern can only occur near $1/2$.

\begin{prop}
\label{prop:Tallowed}
	Fix $n \geq 3$. Then for all $x \in \bigl( \frac{2^{n-2}}{2^{n-1}+1}, \frac{2^{n-2}}{2^{n-1}-1}\bigr) \backslash \{\frac{1}{2} \}$, 
	\[
		\pat(x,T,n) = (n-1)n 1 2 3 \cdots (n-2).
	\]
\end{prop}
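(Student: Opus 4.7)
The plan is to compute each iterate $T^k(x)$ explicitly in terms of $\epsilon := |x - 1/2|$ (which is positive since $x \neq 1/2$) and then verify the ordering
\[
T^2(x) < T^3(x) < \cdots < T^{n-1}(x) < x < T(x),
\]
which is exactly the sequence of inequalities specified by the pattern $(n-1)\,n\,1\,2\,\cdots\,(n-2)$.

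First I would split on whether $x < 1/2$ or $x > 1/2$ and compute $T(x)$ and $T^2(x)$ directly from the definition of the tent map. In both cases, the given interval forces $T(x) > 1/2$: when $x < 1/2$ we have $x > 1/4$ so $T(x) = 2x > 1/2$, and when $x > 1/2$ we have $x < 2/3$ so $T(x) = 2(1-x) > 2/3$. Hence $T^2(x) = 2(1 - T(x))$, and both cases collapse cleanly to $T(x) = 1 - 2\epsilon$ and $T^2(x) = 4\epsilon$.

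Next, I would establish by induction that $T^k(x) = 2^k \epsilon$ for $2 \leq k \leq n-1$. The essential estimate is $\epsilon < 1/2^{n-1}$ throughout the given interval, which is immediate since the two distances $1/2 - 2^{n-2}/(2^{n-1}+1) = 1/(2^n+2)$ and $2^{n-2}/(2^{n-1}-1) - 1/2 = 1/(2^n-2)$ are both less than $1/2^{n-1}$ for $n \geq 3$. Consequently $T^{k-1}(x) = 2^{k-1}\epsilon < 1/2$ whenever $k \leq n-1$, so each subsequent iterate doubles cleanly: $T^k(x) = 2 T^{k-1}(x) = 2^k \epsilon$.

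Finally, I would verify the inequalities defining the pattern. The chain $T^2(x) < T^3(x) < \cdots < T^{n-1}(x)$ is immediate from the doubling formula since $\epsilon > 0$, and $x < T(x)$ reduces to $\epsilon < 1/2$ (when $x < 1/2$) or $\epsilon < 1/6$ (when $x > 1/2$), both easy checks. The decisive inequality $T^{n-1}(x) < x$ is precisely what pins down the endpoints of the interval: when $x < 1/2$, $2^{n-1}\epsilon < 1/2 - \epsilon$ rearranges to $x > 2^{n-2}/(2^{n-1}+1)$, while when $x > 1/2$, $2^{n-1}\epsilon < 1/2 + \epsilon$ rearranges to $x < 2^{n-2}/(2^{n-1}-1)$. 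Thus the stated interval is exactly the set on which the ordering above holds. The main obstacle is essentially bookkeeping—handling the two cases in parallel and tracking the bound on $\epsilon$—rather than any genuine subtlety.
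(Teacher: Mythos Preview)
Your proof is correct and follows essentially the same line as the paper's: both compute $T^m(x)$ as $2^m|x-\tfrac{1}{2}|$ near $x=\tfrac{1}{2}$ (the paper via the piecewise-linear formulas $y=\pm 2^m(x-\tfrac{1}{2})$, you via the substitution $\epsilon=|x-\tfrac{1}{2}|$ and a short induction), then solve $T^{n-1}(x)=x$ to pin down the endpoints. Your $\epsilon$-parametrization unifies the two cases $x\lessgtr\tfrac{1}{2}$ a bit more cleanly and is more explicit about the bound $\epsilon<1/2^{n-1}$ needed for the induction, but the underlying argument is the same.
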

\begin{proof}
	First notice that if $m \geq 2$, then $T^m(\tfrac{1}{2}) = 0$. Also, $T^m$ is a piecewise linear function with slope $\pm 2^m$. Thus the monotone segment of $T^m$
	to the left of $x=1/2$ is
	\begin{equation}
	\label{eq:piecewise1}
		y = -2^m \left( x-\tfrac{1}{2} \right), \quad \tfrac{2^{m-1}-1}{2^m} \leq x \leq \tfrac{1}{2},
	\end{equation}
	while the segment to the right is
	\begin{equation}
	\label{eq:piecewise2}
		y = 2^m \left( x-\tfrac{1}{2} \right), \quad \tfrac{1}{2} \leq  x \leq \tfrac{2^{m-1}+1}{2^m}.
	\end{equation}
	It follows that for all $x \in \bigl( \frac{2^{n-2}-1}{2^{n-1}}, \frac{2^{n-2}+1}{2^{n-1}} \bigr) \backslash \{\frac{1}{2} \}$,
	\[
		T^2(x) < T^3(x) < \cdots < T^{n-1}(x).
	\]
	To finish the proof, it suffices to find a (possibly smaller) interval on which
	\[
		T^{n-1}(x) < x < T(x).
	\]
	Note first that $x < T(x)$ for all $x \in (0, \frac{2}{3})$. Now, simply set \eqref{eq:piecewise1} and \eqref{eq:piecewise2} equal to $x$ (taking $m=n-1$) and solve. This 
	yields
	\[
		x = \frac{2^{n-2}}{2^{n-1}+1}, \; \frac{2^{n-2}}{2^{n-1}-1}.
	\]
	Notice that $\frac{2^{n-2}}{2^{n-1}-1} < \frac{2}{3}$ for all $n \geq 3$. Thus for all $x \in \bigl( \frac{2^{n-2}}{2^{n-1}+1}, \frac{2^{n-2}}{2^{n-1}-1}\bigr) \backslash 
	\{\frac{1}{2} \}$, we have
	\[
		T^2(x) < T^3(x) < \cdots < T^{n-1}(x) < x < T(x),	
	\]
	so $\pat(x, T, n) = (n-1)n123\cdots(n-2)$.
\end{proof}

\begin{prop}
	The pattern $(n-1)n123 \cdots (n-2)$ is realized nowhere else.
\end{prop}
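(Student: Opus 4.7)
The goal is to prove the converse of Proposition~\ref{prop:Tallowed}: if $\pat(x, T, n) = (n-1) n 1 2 3 \cdots (n-2)$, then $x \in \bigl( \tfrac{2^{n-2}}{2^{n-1}+1}, \tfrac{2^{n-2}}{2^{n-1}-1} \bigr) \setminus \{1/2\}$. The pattern translates to the chain
\[
T^2(x) < T^3(x) < \cdots < T^{n-1}(x) < x < T(x),
\]
so the strategy is to extract structural constraints from this chain and then solve for $x$ using the piecewise-linear form of $T$.

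First, I would exploit the elementary fact that $T(y) > y$ precisely when $y \in (0, 2/3)$. Applied to $y = x$ and to $y = T^k(x)$ for each $k \in \{2, \ldots, n-2\}$ (where $T^{k+1}(x) > T^k(x)$), this forces $x, T^2(x), \ldots, T^{n-2}(x) \in (0, 2/3)$; combining with $T^{n-1}(x) < x$ yields $T^k(x) \in (0, 2/3)$ for every $k \neq 1$ in the orbit. Also $T(x) > 1/2$: otherwise $T^2(x) = 2T(x) \geq T(x) > x$, violating the pattern. Consequently $x \in (1/4, 3/4) \setminus \{1/2\}$.

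The main technical step is the claim that $T^k(x) \leq 1/2$ for every $k = 2, \ldots, n-2$. Suppose for contradiction that $T^k(x) > 1/2$ for some such $k$. Then $T^k(x) \in (1/2, 2/3)$, which gives $T^{k+1}(x) = 2(1 - T^k(x)) \in (2/3, 1)$. But since $k + 1 \leq n - 1$, the chain forces $T^{k+1}(x) \leq T^{n-1}(x) < x < 2/3$, contradicting $T^{k+1}(x) > 2/3$. Hence all of $T^2(x), \ldots, T^{n-2}(x)$ lie in $[0, 1/2]$, so $T^{j+1}(x) = 2 T^j(x)$ on this range and $T^{n-1}(x) = 2^{n-3} T^2(x)$.

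It remains to solve $T^{n-1}(x) < x$ in the two sub-cases. For $x \in (1/4, 1/2)$: $T(x) = 2x$ and $T^2(x) = 2 - 4x$, so $T^{n-1}(x) = 2^{n-2}(1 - 2x)$, and the inequality reduces to $x > \tfrac{2^{n-2}}{2^{n-1}+1}$. For $x \in (1/2, 3/4)$: $T(x) = 2(1 - x)$ and $T^2(x) = 4x - 2$, so $T^{n-1}(x) = 2^{n-2}(2x - 1)$, and the inequality becomes $x < \tfrac{2^{n-2}}{2^{n-1}-1}$. The union of the two sub-intervals is exactly the interval in Proposition~\ref{prop:Tallowed}, minus the point $x = 1/2$. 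The principal obstacle is the structural step confining the orbit to the increasing branch of $T$; once that is established, the remaining algebra is routine.
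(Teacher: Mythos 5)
Your proposal is correct, and it takes a genuinely different route from the paper. The paper argues by induction on $n$: it verifies the base case $\sigma_3 = 231$ directly, assumes $\sigma_{n-1}$ is realized only on $\bigl(\tfrac{2^{n-3}}{2^{n-2}+1}, \tfrac{2^{n-3}}{2^{n-2}-1}\bigr)$, notes that the first $n-1$ entries of $\sigma_n$ have the same relative order as $\sigma_{n-1}$ (so $\sigma_n$ can only live inside the inductive interval), and then uses the piecewise-linear formulas near $1/2$ to see where the extra constraint $T^{n-1}(x) < x$ carves out the smaller interval. Your argument is instead direct: starting from the full chain $T^2(x) < \cdots < T^{n-1}(x) < x < T(x)$, you use the fixed-point fact $T(y) > y \Leftrightarrow y \in (0, 2/3)$ to pin the whole orbit (except $T(x)$) into $(0, 2/3)$, derive $T(x) > 1/2$ (hence $x \in (1/4, 3/4)$), and then prove the key structural lemma that $T^2(x), \ldots, T^{n-2}(x)$ all lie in $[0,1/2]$ — i.e., the middle of the orbit never leaves the doubling branch. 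This collapses $T^{n-1}(x)$ to $2^{n-3}T^2(x)$ and reduces the problem to solving a linear inequality on each of $(1/4, 1/2)$ and $(1/2, 3/4)$. The two approaches buy different things: the paper's induction is terser and reuses the interval for $\sigma_{n-1}$ wholesale, while your direct argument is self-contained, does not require a separate base case, and exposes the underlying dynamical mechanism (the orbit is confined to the left branch until the final step) more transparently. One cosmetic remark: for the vacuous case $n = 3$ your ``main technical claim'' has an empty index range, and the formula $T^{n-1}(x) = 2^{n-3}T^2(x)$ degenerates correctly to $T^2(x) = T^2(x)$, so the argument goes through uniformly for all $n \ge 3$; it is worth noting this explicitly so the reader sees the base case is handled.
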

\begin{proof}
	We proceed by induction on $n$. Notice first that the pattern $\sigma_3 = 231$ is realized only on the interval $(\tfrac{2}{5}, \tfrac{2}{3})$. Assume
	$\sigma_{n-1} = (n-2)(n-1)123 \cdots (n-3)$ is realized only on the interval $\bigl( \frac{2^{n-3}}{2^{n-2}+1}, \frac{2^{n-3}}{2^{n-2}-1}\bigr)$. Then $\sigma_n$
	cannot occur outside this interval, since the first $n-1$ terms of $\sigma_n$ are in the same relative order as $\sigma_{n-1}$. As stated in the proof of Proposition 
	\ref{prop:Tallowed}, $T^{n-1}(x) = x$ at the points $\frac{2^{n-2}}{2^{n-1}+1}$ and $\frac{2^{n-2}}{2^{n-1}-1}$, and $T^{n-1}(x) < x$ when $x$ lies between these points. By
	\eqref{eq:piecewise1}, $T^{n-1}$ is linear on the interval $\bigl(\frac{2^{n-2}+1}{2^{n-1}}, \frac{1}{2} \bigr)$, therefore $x < T^{n-1}(x)$ when $x \in \bigl( \frac{2^{n-2}+1}{2^{n-1}},
	\frac{2^{n-2}}{2^{n-1}+1} \bigr)$. Likewise, it follows from \eqref{eq:piecewise2} that $x < T^{n-1}(x)$ 
	when $x \in \bigl( \frac{2^{n-2}}{2^{n-1}-1}, \frac{2^{n-2}+1}{2^{n-1}} \bigr)$. Thus $\sigma_n$ cannot be realized on $\bigl( \frac{2^{n-2}+1}{2^{n-1}}, \frac{2^{n-2}}{2^{n-1}+1} \bigr)$ 
	or $\bigl( \frac{2^{n-2}}{2^{n-1}-1}, \frac{2^{n-2}+1}{2^{n-1}} \bigr)$. It is straightforward to check that
	\[
		\frac{2^{n-2}+1}{2^{n-1}} < \frac{2^{n-3}}{2^{n-2}+1} \quad \text{and} \quad \frac{2^{n-3}}{2^{n-2}-1} < \frac{2^{n-2}+1}{2^{n-1}},
	\]
	so it follows that the only points of $\bigl( \frac{2^{n-3}}{2^{n-2}+1}, \frac{2^{n-3}}{2^{n-2}-1}\bigr)$ satisfying $x < T^{n-1}(x)$ lie in the smaller interval 
	$\bigl( \frac{2^{n-2}}{2^{n-1}+1}, \frac{2^{n-2}}{2^{n-1}-1}\bigr)$. Therefore, $\sigma_n$ is realized only on this interval.
\end{proof}

Thanks to Proposition \ref{prop:range}, we know many values that are omitted from the range of $h_\mu$ when $1/2 < \mu < 1$. We can 
use this information to determine conditions for when $T_\mu$ avoids the pattern $\sigma_n = (n-1)n123 \cdots (n-2)$ from the previous two propositions.

\begin{cor}
\label{cor:forbidden}
	If
	\[
		h_\mu(\mu) < 2 \left( 1 - \frac{2^{n-2}}{2^{n-1}-1} \right),
	\]
	then $T_\mu$ avoids the pattern $\sigma_n = (n-1)n123\cdots(n-2)$.
\end{cor}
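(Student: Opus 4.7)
The plan is to combine the characterization of where $T$ realizes $\sigma_n$ (from the preceding two propositions) with the explicit gap in the range of $h_\mu$ around $1/2$ (from Proposition~\ref{prop:range}). Recall from the proof of Theorem~\ref{thm:semiconjugate_patterns} that if $T_\mu$ realizes a pattern $\pi$ at some $x \in [0,1]$, then $T$ realizes $\pi$ at $h_\mu(x)$, which necessarily lies in the range of $h_\mu$. So to show $T_\mu$ avoids $\sigma_n$, it suffices to verify that no point of the range of $h_\mu$ is a place where $T$ realizes $\sigma_n$.

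The two preceding propositions tell us that $T$ realizes $\sigma_n$ precisely on the set
\[
\left( \frac{2^{n-2}}{2^{n-1}+1}, \frac{2^{n-2}}{2^{n-1}-1} \right) \setminus \{\tfrac{1}{2}\},
\]
and Proposition~\ref{prop:range} (in its $n=1$, $i=1$ case) says the range of $h_\mu$ misses the open interval $\bigl( \tfrac{h_\mu(\mu)}{2}, 1 - \tfrac{h_\mu(\mu)}{2} \bigr)$. The whole corollary therefore reduces to verifying the inclusion
\[
\left( \frac{2^{n-2}}{2^{n-1}+1}, \frac{2^{n-2}}{2^{n-1}-1} \right) \subseteq \left( \frac{h_\mu(\mu)}{2}, 1 - \frac{h_\mu(\mu)}{2} \right),
\]
which is a pair of elementary inequalities on the endpoints.

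The right-endpoint inequality $\frac{2^{n-2}}{2^{n-1}-1} < 1 - \frac{h_\mu(\mu)}{2}$ is just a rearrangement of the hypothesis $h_\mu(\mu) < 2\bigl(1 - \tfrac{2^{n-2}}{2^{n-1}-1}\bigr)$. The left-endpoint inequality $\frac{h_\mu(\mu)}{2} < \frac{2^{n-2}}{2^{n-1}+1}$ is equivalent to $h_\mu(\mu) < \frac{2^{n-1}}{2^{n-1}+1}$, so to obtain it I would rewrite the hypothesis in the simpler form $h_\mu(\mu) < \frac{2^{n-1}-2}{2^{n-1}-1}$ and check by cross multiplication that $\frac{2^{n-1}-2}{2^{n-1}-1} < \frac{2^{n-1}}{2^{n-1}+1}$. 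The hypothesis then implies the left-endpoint bound as well, and the inclusion follows.

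There is not much of an obstacle here; the corollary is really a bookkeeping exercise, patching together Theorem~\ref{thm:semiconjugate_patterns}, Proposition~\ref{prop:range}, and the precise location of $\sigma_n$ in $\Allow(T)$. The only thing that requires a moment of thought is noting that the single hypothesis—phrased asymmetrically in terms of only the right endpoint—is strong enough to cover the left endpoint too, which is the content of the elementary numerical comparison above.
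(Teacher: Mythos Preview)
Your proof is correct and follows essentially the same approach as the paper: both reduce the corollary to the inclusion of the $\sigma_n$-realization interval inside the gap $\bigl(\tfrac{h_\mu(\mu)}{2},\,1-\tfrac{h_\mu(\mu)}{2}\bigr)$, observe that the right-endpoint inequality is the hypothesis, and verify the left-endpoint inequality by a cross-multiplication showing the hypothesis is the stronger of the two bounds. The only difference is cosmetic algebra---you simplify the hypothesis to $h_\mu(\mu)<\tfrac{2^{n-1}-2}{2^{n-1}-1}$ before cross-multiplying, whereas the paper works with $\tfrac{h_\mu(\mu)}{2}<\tfrac{2^{n-1}-2^{n-2}-1}{2^{n-1}-1}$.
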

\begin{proof}
	We already know that the range of $h$ omits the interval 
	\[
		\left( \frac{h_\mu(\mu)}{2}, 1- \frac{h_\mu(\mu)}{2} \right)
	\]
	and that $\sigma_n$ is only realized on the interval $\bigl( \frac{2^{n-2}}{2^{n-1}+1}, \frac{2^{n-2}}{2^{n-1}-1}\bigr) \backslash \{\frac{1}{2} \}$. In
	light of this, it suffices to show that
	\[
		\frac{h_\mu(\mu)}{2} < \frac{2^{n-2}}{2^{n-1}+1} \quad \text{and} \quad 1 - \frac{h_\mu(\mu)}{2} > \frac{2^{n-2}}{2^{n-1}-1}.
	\]
	The latter inequality is immediate from our hypothesis. We can get the first inequality from the second by simply reflecting over the line 
	$x=1/2$:
	\[
		\frac{h_\mu(\mu)}{2} < 1-\frac{2^{n-2}}{2^{n-1}-1} = \frac{2^{n-1}-2^{n-2}-1}{2^{n-1}-1}.
	\]
	Since
	\begin{align*}
		(2^{n-1}-2^{n-2}-1)(2^{n-1}+1) &= 2^{2n-2} - 2^{2n-3} -2^{n-2} - 1 \\
			&= 2^{2n-3} - 2^{n-2} - 1 \\
			& < 2^{2n-3} - 2^{n-2} \\
			& = 2^{n-2}(2^{n-1}-1),
	\end{align*}
	it follows that 
	\[
		\frac{h_\mu(\mu)}{2} < \frac{2^{n-2}}{2^{n-1}+1},
	\]
	and we are done.
\end{proof}

Given the inherent mystery surrounding the functions $h_\mu$, it would be nice if we could somehow obtain a bound involving $\mu$ itself that
would guarantee $T_\mu$ avoids $\sigma_n$. To do so, we first need to relate $h_\mu(\mu)$ to $\mu$. This involves a more careful implementation of
the estimates in the proof of Theorem \ref{thm:unifcont}.

\begin{prop}
\label{prop:bound}
	For all $\mu \in (1/2,1]$, $\abs{h_\mu(\mu) - \mu} \leq \frac{1}{2}(1-\mu) + (1-\mu)^2$.
\end{prop}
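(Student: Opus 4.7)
The plan is to invoke the triangle inequality via the intermediate value $M_\mu h(\mu)$, where $h(x) = x$ denotes the identity function. This refines the proof of Theorem~\ref{thm:unifcont} by extracting a sharper local estimate at the single point $x = \mu$ rather than a uniform one on $[0,1]$.

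First I would compute $M_\mu h(\mu)$ explicitly. Since $\mu > 1/2$, the definition of $M_\mu$ in \eqref{eq:operator} yields
\[
	M_\mu h(\mu) = 1 - \tfrac{1}{2} h(2\mu(1-\mu)) = 1 - \mu(1-\mu),
\]
and consequently
\[
	\abs{M_\mu h(\mu) - \mu} = \abs{1 - 2\mu + \mu^2} = (1-\mu)^2.
\]
This is strictly sharper than the worst-case bound $(1-\mu)/2$ established in the proof of Theorem~\ref{thm:unifcont}, and it accounts for the quadratic term in the claimed inequality.

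Next I would control the remaining error $\abs{h_\mu(\mu) - M_\mu h(\mu)}$. Using the fact that $h_\mu = M_\mu h_\mu$ is a fixed point of $M_\mu$, together with contractivity of $M_\mu$ with constant $1/2$ and the global bound $\norm{h_\mu - h}_\infty \leq 1-\mu$ from Theorem~\ref{thm:unifcont}, I get
\[
	\abs{h_\mu(\mu) - M_\mu h(\mu)} = \abs{M_\mu h_\mu(\mu) - M_\mu h(\mu)} \leq \tfrac{1}{2} \norm{h_\mu - h}_\infty \leq \tfrac{1}{2}(1-\mu).
\]
Applying the triangle inequality then gives
\[
	\abs{h_\mu(\mu) - \mu} \leq \abs{h_\mu(\mu) - M_\mu h(\mu)} + \abs{M_\mu h(\mu) - \mu} \leq \tfrac{1}{2}(1-\mu) + (1-\mu)^2,
\]
as required.

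There is no substantive obstacle here; the argument is essentially bookkeeping on top of Theorem~\ref{thm:unifcont}. The only point worth checking is that the input $2\mu(1-\mu)$ to $h$ in the computation of $M_\mu h(\mu)$ lies in $[0,1]$ so that the formula applies, which follows since $2\mu(1-\mu) \leq 1/2$ for $\mu \in (1/2,1]$.
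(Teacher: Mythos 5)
Your proof is correct and follows essentially the same route as the paper: both decompose $\abs{h_\mu(\mu) - \mu}$ via the triangle inequality through $M_\mu h(\mu)$, bound the first term by $\tfrac{1}{2}(1-\mu)$ using contractivity of $M_\mu$ and the uniform estimate $\norm{h_\mu - h}_\infty \leq 1-\mu$ from Theorem~\ref{thm:unifcont}, and compute $\abs{M_\mu h(\mu) - \mu} = (1-\mu)^2$ directly. The only cosmetic difference is that you evaluate $M_\mu h$ at the single point $x = \mu$ from the outset, while the paper records the identity $\abs{M_\mu h(x) - x} = (1-\mu)(1-x)$ for all $x \in (1/2,1]$ before specializing.
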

\begin{proof}
	Notice first that
	\[
		\abs{h_\mu(x) - x} \leq \abs{h_\mu(x) - M_\mu h(x)} + \abs{M_\mu h(x) - x}
	\]
	for all $x \in [0,1]$. Since each $M_\mu$ is a contraction with contraction constant $1/2$, we have
	\[
		\abs{h_\mu(x) - M_\mu h(x)} = \abs{M_\mu h_\mu(x) - M_\mu h(x)} \leq \tfrac{1}{2} \norm{h_\mu - h}_\infty \leq \tfrac{1}{2}(1-\mu).
	\]
	Moreover, if $x \in (1/2,1]$, we have
	\[
		\abs{M_\mu h(x) - x} = (1-\mu)(1-x)
	\]
	from the proof of Theorem \ref{thm:unifcont}. It follows then that
	\[
		\abs{h_\mu(\mu) - \mu} \leq \tfrac{1}{2}(1-\mu) + (1-\mu)^2.  \qedhere
	\]
\end{proof}

We can now couple this estimate with Corollary \ref{cor:forbidden} to obtain a bound in terms of $\mu$ that guarantees the avoidance of certain patterns by $T_\mu$.

\begin{thm}
\label{thm:mubound}
	Fix $n >5$. If
	\begin{equation}
	\label{eq:inequality}
		\mu < \frac{3}{4} + \frac{1}{4} \sqrt{9-\frac{2^{n+2}+8}{2^{n-1}-1}},
	\end{equation}
	then $T_\mu$ avoids the pattern $\sigma_n = (n-1)n123\cdots(n-2)$.
\end{thm}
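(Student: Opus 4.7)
The plan is to combine Proposition \ref{prop:bound} with Corollary \ref{cor:forbidden} and then solve a quadratic inequality in $\mu$. By Corollary \ref{cor:forbidden}, it suffices to prove that the hypothesis \eqref{eq:inequality} on $\mu$ forces
\[
	h_\mu(\mu) < 2\left( 1 - \frac{2^{n-2}}{2^{n-1}-1} \right) = \frac{2^{n-1}-2}{2^{n-1}-1}.
\]
By Proposition \ref{prop:bound}, we have the pointwise estimate $h_\mu(\mu) \leq \mu + \tfrac{1}{2}(1-\mu) + (1-\mu)^2$, so it is enough to show
\[
	\mu + \tfrac{1}{2}(1-\mu) + (1-\mu)^2 < \frac{2^{n-1}-2}{2^{n-1}-1}.
\]

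Next I would expand the left side and collect terms, rewriting the inequality as
\[
	\mu^2 - \tfrac{3\mu}{2} + \tfrac{3}{2} - \frac{2^{n-1}-2}{2^{n-1}-1} < 0.
\]
A short computation shows $\tfrac{3}{2} - \tfrac{2^{n-1}-2}{2^{n-1}-1} = \tfrac{2^{n-1}+1}{2(2^{n-1}-1)}$, so this is exactly the quadratic
\[
	\mu^2 - \tfrac{3\mu}{2} + \frac{2^{n-1}+1}{2(2^{n-1}-1)} < 0.
\]
Applying the quadratic formula and simplifying $8\cdot 2^{n-1} = 2^{n+2}$ yields the two roots
\[
	\mu_\pm = \tfrac{3}{4} \pm \tfrac{1}{4}\sqrt{9 - \frac{2^{n+2}+8}{2^{n-1}-1}},
\]
and the inequality holds precisely when $\mu_- < \mu < \mu_+$.

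The hypothesis \eqref{eq:inequality} says exactly $\mu < \mu_+$, so to close the argument I would verify two small things. First, for $n > 5$ the discriminant $9 - \tfrac{2^{n+2}+8}{2^{n-1}-1}$ is positive (indeed this inequality rearranges to $2^{n-1} > 17$, which holds for $n \geq 6$), so $\mu_\pm$ are real. Second, I need to check that the relevant $\mu \in (1/2,1]$ satisfies the lower bound $\mu > \mu_-$ as well; one confirms that $\mu_- < 1/2$ by showing $\tfrac{1}{4}\sqrt{9 - \tfrac{2^{n+2}+8}{2^{n-1}-1}} > \tfrac{1}{4}$, i.e.\ $9 - \tfrac{2^{n+2}+8}{2^{n-1}-1} > 1$, which simplifies to $2^{n-1} > 2$ and holds for all $n \geq 3$. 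This together with $\mu > 1/2$ gives $\mu > \mu_-$ automatically, and I am done.

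The argument is essentially a routine chain of estimates; the only step that requires care is the algebraic simplification identifying the discriminant $9 - (2^{n+2}+8)/(2^{n-1}-1)$ with the quadratic coming out of the Proposition \ref{prop:bound} bound combined with Corollary \ref{cor:forbidden}. The hypothesis $n > 5$ is precisely what is needed to ensure this discriminant is positive, so the theorem gives a non-vacuous bound in the claimed range.
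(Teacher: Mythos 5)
Your proof follows the paper's argument step for step: reduce via Corollary~\ref{cor:forbidden} to bounding $h_\mu(\mu)$, bound $h_\mu(\mu)$ using Proposition~\ref{prop:bound}, collect terms into the quadratic $\mu^2 - \tfrac{3}{2}\mu + \tfrac{2^{n-1}+1}{2(2^{n-1}-1)} < 0$, and solve. All of that, including the simplification of the discriminant to $9 - \tfrac{2^{n+2}+8}{2^{n-1}-1}$, is correct and matches the paper.

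However, the final step of your argument contains an error, and it is not a minor one. You assert that $\mu_- < 1/2$ by claiming $9 - \tfrac{2^{n+2}+8}{2^{n-1}-1} > 1$ ``simplifies to $2^{n-1} > 2$.'' That simplification is wrong. The inequality $9 - \tfrac{2^{n+2}+8}{2^{n-1}-1} > 1$ is equivalent to $\tfrac{2^{n+2}+8}{2^{n-1}-1} < 8$, i.e.\ $2^{n+2}+8 < 8(2^{n-1}-1) = 2^{n+2}-8$, i.e.\ $8 < -8$, which is \emph{never} true. A cleaner way to see what is going on: write
\[
	\frac{2^{n+2}+8}{2^{n-1}-1} = 8 + \frac{16}{2^{n-1}-1},
\]
so the discriminant equals $1 - \tfrac{16}{2^{n-1}-1}$, which is strictly less than $1$ for every $n$. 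Hence
\[
	\mu_- = \frac{3}{4} - \frac{1}{4}\sqrt{1 - \frac{16}{2^{n-1}-1}} > \frac{3}{4} - \frac{1}{4} = \frac{1}{2},
\]
with $\mu_- \to 1/2$ only in the limit $n \to \infty$ (for $n=6$, $\mu_- \approx 0.576$). So your closing claim that ``$\mu > 1/2$ gives $\mu > \mu_-$ automatically'' fails, and the chain of estimates actually proves the conclusion only for $\mu \in (\mu_-, \mu_+)$, leaving a genuine gap for $\mu \in (1/2, \mu_-]$.

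For what it is worth, the paper's own proof asserts ``the first term is always less than $1/2$'' without justification, and that assertion is in fact false for exactly the reason above. You reproduced the structure of the paper's argument faithfully; your attempt to supply the missing algebraic verification is where the problem surfaces. Closing this gap requires an additional input, e.g.\ monotonicity in $\mu$ of whether $T_\mu$ realizes a given pattern, which is essentially the content of the paper's Conjecture~2 and is not established there.
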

\begin{proof}
	We know from Corollary \ref{cor:forbidden} that $\sigma_n$ is avoided by $T_\mu$ if 
	\[
		h_\mu(\mu) < 2 \left(1-\frac{2^{n-2}}{2^{n-1}-1} \right) = 2 - \frac{2^{n-1}}{2^{n-1}-1}.
	\]
	But the previous proposition shows that $h_\mu(\mu) \leq \frac{1}{2}(1-\mu) + (1-\mu)^2 + \mu$, so
	\begin{equation}
	\label{eq:guarantee}
		\tfrac{1}{2}(1-\mu) + (1-\mu)^2 + \mu < 2 - \frac{2^{n-1}}{2^{n-1}-1}
	\end{equation}
	would guarantee that $T_\mu$ avoids $\sigma_n$. This inequality is equivalent to
	\[
		\mu^2 - \frac{3}{2}\mu + \frac{2^{n-1}+1}{2^n-2} < 0.
	\]
	The roots of this quadratic are precisely
	\[
		\mu = \frac{3}{4} \pm \frac{1}{4} \sqrt{9-\frac{2^{n+2}+8}{2^{n-1}-1}},
	\]
	which are real provided $n > 5$. Thus \eqref{eq:guarantee} is satisfied whenever
	\[
		\frac{3}{4} - \frac{1}{4} \sqrt{9-\frac{2^{n+2}+8}{2^{n-1}-1}} < \mu < \frac{3}{4} + \frac{1}{4} \sqrt{9-\frac{2^{n+2}+8}{2^{n-1}-1}}.
	\]
	The first term is always less than $1/2$, so we are simply left with \eqref{eq:inequality}. The result then follows.
\end{proof}

Notice that this theorem implies that the inclusion $\Allow(T_\mu) \subseteq \Allow(T_1)$ in Theorem \ref{thm:main} is strict when $\mu<1$. Indeed for any $\mu<1$, there is a sufficiently large $n$ so that Theorem \ref{thm:mubound} implies that $T_\mu$ avoids $\sigma_n$, while such patterns belong to $\Allow(T)$ for all $n$.
As discussed in the next section, the patterns $\sigma_n$ are of particular interest, as we conjecture that the smallest pattern allowed by $T$ and avoided by $T_\mu$ is of the form $\sigma_n$ for some $n$. 

For small values of $n$, we can compute
\[
	\sup \bigl\{ \mu : \sigma_n \not \in \Allow(T_\mu) \bigr\}
\]
exactly. We present these values for $4 \leq n \leq 12$ in Table \ref{tab:table}, together with the upper bounds computed using Theorem \ref{thm:mubound}. (We omit the case
$n=3$, since $\sigma_3 = 231$ is an allowed pattern of $T_\mu$ for $1/2 < \mu \leq 1$.)


\begin{table}
\centering
\setlength{\tabcolsep}{2em}
\begin{tabular}{|c|c|c|}
	\hline
	$n$ & $\mu_{n,a}$ & $\mu_{n,e}$ \\
	\hline
	4 & 0.809017 & --- \\
	\hline
	5 & 0.919643 & --- \\
	\hline
	6 & 0.963781 & 0.923902 \\
	\hline
	7 & 0.982974 & 0.965933 \\
	\hline
	8 & 0.991791 & 0.983722 \\
	\hline
	9 & 0.995982 & 0.992030 \\
	\hline
	10 & 0.998016 & 0.996055 \\
	\hline
	11 & 0.999015 & 0.998037 \\
	\hline
	12 & 0.999509 & 0.999021 \\
	\hline
\end{tabular}
\vspace{3mm}
\caption{This table depicts the true and estimated upper bounds on $\mu$ (to six decimal places) that guarantee $T_\mu$ avoids $\sigma_n$ for some specific values of $n$. Here 
$\mu_{n,a}$ is the true upper bound (i.e., $T_\mu$ avoids $\sigma_n$ if and only if $\mu < \mu_{n, a}$) while $\mu_{n,e}$ is the upper bound afforded by Theorem \ref{thm:mubound}.}
\label{tab:table}
\end{table}

\section{Conjectures}\label{secOpen}
We now state some conjectures related to this work. Given $\mu \in (1/2, 1)$, we define a pattern $\pi$ to be 
\emph{$\mu$-forbidden} if $\pi \in \Allow(T)$ but $\pi \not\in \Allow(T_\mu)$.

Our first conjecture is that the shortest pattern avoided by $T_\mu$, but allowed by $T$, can always be taken to be of the form $\sigma_n = (n-1)n123\cdots(n-2)$. In other 
words, there may be other patterns of the same length that are $\mu$-forbidden, but none shorter than the shortest $\sigma_n$ that is $\mu$-forbidden.

\begin{conj}
	For any $1/2 < \mu < 1$, the shortest $\mu$-forbidden pattern is of the form
	\[
		\sigma_n = (n-1)n123\cdots(n-2).
	\]
	That is, if $n$ is the length of the shortest $\mu$-forbidden pattern, then $T_\mu$ avoids $\sigma_n$.
\end{conj}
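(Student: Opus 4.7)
The plan is to use the dictionary, developed at the start of Section~\ref{secRange}, between allowed patterns of $T_\mu$ and the range of $h_\mu$. For $\pi \in \Allow_n(T)$, write $I_\pi := \{x \in [0,1] : \Pat(x, T, n) = \pi\}$. Then $\pi \in \Allow(T_\mu)$ if and only if $I_\pi \cap h_\mu([0,1]) \neq \emptyset$, so $\pi$ is $\mu$-forbidden precisely when $I_\pi$ is contained entirely in one of the ``gap'' intervals catalogued in Proposition~\ref{prop:range}. Setting $\mu_\pi := \inf\{\mu : \pi \in \Allow(T_\mu)\}$, the conjecture reduces to the claim that $\sigma_n$ is the \emph{last} length-$n$ pattern to be realized as $\mu$ grows, i.e. $\mu_\pi \leq \mu_{\sigma_n}$ for every $\pi \in \Allow_n(T)$; indeed, if $\sigma_n \in \Allow(T_\mu)$ then every length-$n$ pattern of $T$ is in $\Allow(T_\mu)$, which is equivalent to the stated conjecture.

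The first step is to notice that Propositions~\ref{prop:Tallowed} (and its uniqueness companion) identify $I_{\sigma_n}$ as a narrow interval symmetric about $1/2$, and Proposition~\ref{prop:range} shows that the \emph{largest} gap in $\text{Range}(h_\mu)$ is the interval of radius $(1-h_\mu(\mu))/2$ centered at $1/2$. Thus $I_{\sigma_n}$ is located in the most generous gap of the complement of $\text{Range}(h_\mu)$, giving a strong heuristic for why $\sigma_n$ should be the hardest length-$n$ pattern to realize, and yielding the explicit threshold for $\mu_{\sigma_n}$ through Corollary~\ref{cor:forbidden}.

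The second and main step is to handle every other $\pi \in \Allow_n(T)$ and show that $I_\pi$ meets $\text{Range}(h_\mu)$ for all $\mu \geq \mu_{\sigma_n}$. Using the combinatorial characterization of $\Allow_n(T)$ from \cite{ArcAllow}, I would locate each length-$n$ pattern interval and, when $I_\pi$ lies near a higher-order dyadic rational $(2i-1)/2^k$ with $k \geq 2$, compare its width to the radius $(1-h_\mu(\mu))/2^k$ of the associated gap from Proposition~\ref{prop:range}. Since the gap radii decrease geometrically in $k$ while length-$n$ pattern intervals have widths bounded below by $1/2^{n-1}$, the comparison should succeed provided $1-h_\mu(\mu)$ is at most the critical value forced by $\sigma_n$. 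A natural execution is by induction on $n$, using the self-similar structure of $T^n$ to reduce the problem on a gap around $(2i-1)/2^k$ to a problem about patterns of length $n-k+1$ realized near $1/2$, where the inductive hypothesis applies.

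The main obstacle is precisely this second step: packaging the combinatorial description of length-$n$ pattern intervals of $T$ into a uniform geometric comparison with the gap structure of $\text{Range}(h_\mu)$. Pattern intervals whose components simultaneously cluster near several distinct peaks of $T^n$ will require particular care, as will the boundary case where $I_\pi$ lies inside the gap around a dyadic rational $(2i-1)/2^k$ at exactly the level where the $\sigma_m$ pattern (for appropriate $m$ determined by the self-similar rescaling) is also on the brink of being swallowed — here the induction and the quantitative gap estimates need to line up precisely.
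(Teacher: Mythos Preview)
This statement is listed in the paper as a \emph{conjecture}, not a theorem; the authors give no proof. Their only support is the heuristic remark that the iterates of $T_\mu$ deviate most from those of $T$ near $x=1/2$, combined with Proposition~\ref{prop:Tallowed}, which identifies $\sigma_n$ as the length-$n$ pattern realized there. Your first step formalizes exactly this heuristic, but there is no argument in the paper against which to compare your second step.

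As a strategy, your plan has two concrete gaps beyond the vagueness you already acknowledge in your final paragraph. First, the assertion ``$\pi$ is $\mu$-forbidden precisely when $I_\pi$ is contained entirely in one of the gap intervals catalogued in Proposition~\ref{prop:range}'' overstates what that proposition gives: it only shows those intervals lie \emph{outside} $\operatorname{Range}(h_\mu)$, not that every point outside them lies \emph{inside} the range. Using the dictionary in the direction you need requires knowing the complement of $\operatorname{Range}(h_\mu)$ exactly, which the paper does not establish and which is essentially the content of Conjecture~2. Second, the claim that ``length-$n$ pattern intervals have widths bounded below by $1/2^{n-1}$'' is already false for $n=3$: one checks directly that $I_{132}=(\tfrac{1}{3},\tfrac{2}{5})$, which has width $\tfrac{1}{15}<\tfrac{1}{4}$. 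Since your geometric comparison between pattern-interval widths and gap radii $(1-h_\mu(\mu))/2^k$ rests on this lower bound, the inductive rescaling cannot go through as stated; a genuinely new idea controlling the location and size of \emph{all} pattern intervals relative to the gap structure would be required.
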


In addition to numerical evidence, this conjecture is supported by the observation that the behavior of the iterates of $T_\mu$ differs the most from that of the iterates of $T$ near 
$x=1/2$ (as in Figure \ref{fig:tent}). Therefore, we expect the shortest $\mu$-forbidden pattern to have the form $\Pat(x, T, n)$ for some $n \in \mathbb{N}$ and $x$ in a sufficiently small 
neighborhood of $1/2$. But Proposition \ref{prop:Tallowed} shows that $\Pat(x, T, n) = \sigma_n$ when $x$ is close to $1/2$.

Our second conjecture involves the relationship between the allowed patterns of two tent maps $T_\mu$ and $T_\nu$, where $\mu < \nu$. We 
already know that if $\nu=1$, then
\[
	\Allow(T_\mu) \subseteq \Allow(T_\nu).
\]
We would expect something like this to be true in general, though the iterative process for building commuters falls apart here. However, a closer
analysis of the commuters $h_\mu$ and $h_\nu$, together with Proposition \ref{prop:range}, should yield a positive result.

\begin{conj}
	If $1/2 < \mu < \nu \leq 1$, then $h_\mu(\mu) < h_\nu(\nu)$. Consequently, the range of $h_\mu$ is contained in the range of $h_\nu$,
	and we have
	\[
		\Allow(T_\mu) \subsetneq \Allow(T_\nu).
	\]
\end{conj}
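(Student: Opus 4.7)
The plan is to construct a commuter $\phi = \phi_{\mu,\nu} : [0,1] \to [0,1]$ between $T_\mu$ and $T_\nu$ in direct analogy with the construction of $h_\mu$ in Section~\ref{secCommuter}, then exploit the identity $h_\mu = h_\nu \circ \phi$ to transfer information between $T_\mu$ and $T_\nu$. Define the operator
\[
	M_{\mu,\nu} f(x) = \begin{cases} \frac{1}{2\nu} f(2\mu x) & 0 \leq x \leq 1/2, \\ 1 - \frac{1}{2\nu} f(2\mu(1-x)) & 1/2 < x \leq 1 \end{cases}
\]
on $\F$. Since $\nu > 1/2$ this is a contraction with constant $\frac{1}{2\nu} < 1$, so the Contraction Mapping Theorem supplies a unique fixed point $\phi \in \F$. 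I want to show $\phi$ is a strictly increasing commuter satisfying $T_\nu \circ \phi = \phi \circ T_\mu$. The commutation reduces to the halves-to-halves property $\phi([0,1/2]) \subseteq [0,1/2]$ and $\phi((1/2,1]) \subseteq (1/2,1]$. Starting the iteration from $f_0(x) = x$, the strict inequality $\mu < \nu$ gives $f_1(1/2) = \mu/(2\nu) < 1/2$; an inductive estimate along the orbit of $\mu$ under $T_\mu$, using $\nu(1-\nu) < \mu(1-\mu)$ (which holds because $x(1-x)$ is decreasing on $(1/2,1)$), should bound $f_n(\mu) \leq \nu$ for all $n$ and propagate the halves-to-halves structure to $\phi$ in the limit. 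Strict monotonicity of $\phi$ then follows just as for $h_\mu$: the iterates $f_n$ are strictly increasing by the same case analysis used in Section~\ref{secCommuter}, and Lemmas~\ref{lem: jump disc} and \ref{lem: x0 btwn x and y}, adapted to $\phi$, force dense jump discontinuities in $\phi$.

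With $\phi$ in hand, the key identity $h_\mu = h_\nu \circ \phi$ is proved by checking that $h_\nu \circ \phi$ is also a fixed point of $M_\mu$ in $\F$: the functional equation for $\phi$ rewrites $\phi(2\mu x)$ as $2\nu \phi(x)$, and combined with the functional equation for $h_\nu$ and the halves-to-halves property of $\phi$ it yields exactly \eqref{eq:functional}. Uniqueness of the fixed point of $M_\mu$ forces $h_\mu = h_\nu \circ \phi$, so $\text{Range}(h_\mu) \subseteq \text{Range}(h_\nu)$ is immediate. The inclusion $\Allow(T_\mu) \subseteq \Allow(T_\nu)$ then follows by applying Theorem~\ref{thm:semiconjugate_patterns} to the strictly increasing commuter $\phi$ and the pair $(T_\mu,T_\nu)$. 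For the weak comparison $h_\mu(\mu) \leq h_\nu(\nu)$, note that $T(h_\mu(x)) = h_\mu(T_\mu(x)) \leq h_\mu(\mu)$ with equality at $x = 1/2$, so $h_\mu(\mu) = \sup_{y \in \text{Range}(h_\mu)} T(y)$; the analogous supremum over the larger set $\text{Range}(h_\nu)$ equals $h_\nu(\nu)$, giving the inequality.

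To upgrade both inequalities to strict ones, suppose for contradiction that $h_\mu(\mu) = h_\nu(\nu)$. Then $h_\nu(\phi(\mu)) = h_\nu(\nu)$, and injectivity of $h_\nu$ forces $\phi(\mu) = \nu$; the functional equation then gives $\phi(1/2) = 1/2$. Running the argument of Lemma~\ref{lem: jump disc} in reverse (using local invertibility of $T_\nu^n$ away from critical points) propagates continuity of $\phi$ from $1/2$ to every preimage of $1/2$ under $T_\mu^n$, a dense set by Lemma~\ref{lem: x0 btwn x and y}. Together with monotonicity this makes $\phi$ continuous on $[0,1]$, hence a strictly increasing homeomorphism with $\phi(0) = 0$ and $\phi(1) = 1$, hence a topological conjugacy between $T_\mu$ and $T_\nu$; this contradicts the inequality of their topological entropies $\log(2\mu) < \log(2\nu)$. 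The strict inclusion $\Allow(T_\mu) \subsetneq \Allow(T_\nu)$ follows similarly, either via the same entropy obstruction together with $h_{\mathrm{top}} = \lim_n \frac{1}{n} \log |\Allow_n|$, or directly from Theorem~\ref{thm:mubound}, which for $n$ sufficiently large depending on $\mu$ supplies patterns $\sigma_n$ avoided by $T_\mu$ but realized by $T_\nu$.

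The main obstacle is the first step: verifying rigorously that the fixed point of $M_{\mu,\nu}$ has the halves-to-halves property for every pair $1/2 < \mu < \nu \leq 1$. Numerical experiments (for instance with $\mu = 0.55$, $\nu = 0.6$) indicate that $\phi(\mu)$ sits almost exactly on the borderline $\nu$, so the required bound $\phi(\mu) \leq \nu$ is very tight and almost certainly requires a careful recursive argument tracking $f_n(\mu)$ along the full $T_\mu$-orbit of $\mu$, rather than any one-line estimate. This delicacy is what keeps the result a conjecture rather than a theorem at the present time.
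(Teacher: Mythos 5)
This statement is a conjecture in the paper, not a theorem: the authors write that ``the iterative process for building commuters falls apart here'' and that they ``are unable to prove it at this time,'' so there is no proof of their own to compare against. Your proposal attempts to resurrect exactly that iterative process, and you correctly diagnose why it breaks down. The operator $M_{\mu,\nu}$ with coefficient $\tfrac{1}{2\nu}$ is a perfectly good contraction on $\F$, but unlike $M_\mu$ (whose coefficient $\tfrac12$ forces $M_\mu f([0,\tfrac12]) \subseteq [0,\tfrac12]$ automatically), it does not preserve the halves-to-halves structure: $\tfrac{1}{2\nu} f(2\mu x)$ can exceed $\tfrac12$ when $\nu < 1$. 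Without halves-to-halves, the fixed point $\phi$ of $M_{\mu,\nu}$ need not satisfy $T_\nu\circ\phi = \phi\circ T_\mu$, and the key identity $h_\mu = h_\nu\circ\phi$ (which you verify, correctly, by showing $h_\nu\circ\phi$ is a fixed point of $M_\mu$ provided halves-to-halves holds) is unavailable. Because $\phi$ is monotone with $\phi(\tfrac12) = \tfrac{1}{2\nu}\phi(\mu)$, halves-to-halves is precisely the estimate $\phi(\mu) \le \nu$, and the strict conclusions of the conjecture require $\phi(\mu) < \nu$. You leave this open and note it is numerically tight, so the proposal is a well-organized roadmap rather than a proof; the hard step is exactly where the paper says it is, though you approach it via a $T_\mu$-to-$T_\nu$ commuter $\phi$ rather than via the route the paper hints at (directly comparing the ranges of $h_\mu$ and $h_\nu$ using Proposition~\ref{prop:range}).

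Two secondary steps deserve tightening if you pursue this. First, ``running Lemma~\ref{lem: jump disc} in reverse'' shows $\phi$ is continuous at the dense set of $T_\mu$-preimages of $1/2$, but a monotone function can be continuous on a dense set and still have jumps elsewhere; to conclude $\phi$ is a homeomorphism you would need to show that every discontinuity of $\phi$ is such a preimage, or argue surjectivity of $\phi$ another way. The cleaner route to $\Allow(T_\mu) \subsetneq \Allow(T_\nu)$ is the one you mention in passing: equality of the allowed-pattern sets would force $\log(2\mu) = h_{\mathrm{top}}(T_\mu) = \lim_n \tfrac1n \log|\Allow_n(T_\mu)| = \lim_n \tfrac1n \log|\Allow_n(T_\nu)| = \log(2\nu)$ by the result of \cite{BKP}, contradicting $\mu < \nu$ and sidestepping the continuity analysis of $\phi$ entirely. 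Second, the strict inequality $h_\mu(\mu) < h_\nu(\nu)$ follows most directly from $h_\mu(\mu) = h_\nu(\phi(\mu))$ together with strict monotonicity of $h_\nu$ (Theorem~\ref{thmInc}) once one knows $\phi(\mu) < \nu$; your detour through the supremum characterization $h_\mu(\mu) = \sup_{y\in\mathrm{Range}(h_\mu)} T(y)$ is correct but only yields the non-strict inequality, so it does not buy anything beyond what the composition identity already gives. In short, every thread in the argument passes through the single unproved estimate $\phi(\mu) < \nu$, which is where the conjecture genuinely remains open.
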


To obtain a positive resolution to this conjecture, it is necessary for one to show that $h_\mu(\mu)$ is increasing with $\mu$. Numerical evidence suggests that
this is the case (see Figure \ref{fig:figure3}).

Finally, one would hope for a tighter bound than the one obtained in Proposition \ref{prop:bound}. Numerical evidence indicates that there is a better bound. However, we 
are unable to prove it at this time.

\begin{conj}
	The bound in Proposition \ref{prop:bound} can be improved. In particular, for all $\mu \in (\frac{1}{2}, 1]$ we have
	\[
		h_\mu(\mu) \leq \mu^2 + \tfrac{5}{4}(1-\mu).
	\]
\end{conj}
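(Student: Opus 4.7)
The first step is to use \eqref{eq:functional} to recast the conjecture as a lower bound. Since $2\mu(1-\mu) \leq \tfrac{1}{2}$ for all $\mu \in (\tfrac{1}{2},1]$, applying the functional equation twice gives $h_\mu(\mu) = 1 - \tfrac{1}{4}h_\mu\bigl(4\mu^2(1-\mu)\bigr)$, and a short algebraic rearrangement shows the conjecture is equivalent to
\[
	h_\mu\bigl(4\mu^2(1-\mu)\bigr) \;\geq\; (1-\mu)(4\mu-1).
\]
So the goal is a sharp lower bound on $h_\mu$ at this specific point.

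The plan is to exploit the dyadic self-similarity of $h_\mu$. Define $y_k := (2\mu)^{k+1}(1-\mu)$; as long as $y_{k-1} \leq \tfrac{1}{2}$, the lower-branch identity $h_\mu(x) = \tfrac{1}{2}h_\mu(2\mu x)$ iterates to give $h_\mu(y_k) = 2^{k+1}(1-h_\mu(\mu))$. Let $K = K(\mu) \geq 1$ be the smallest index with $y_K > \tfrac{1}{2}$; such $K$ is finite because $2\mu > 1$. Since $y_K = 2\mu\, y_{K-1} \leq \mu$, the monotonicity from Theorem \ref{thmInc} gives $h_\mu(y_K) \leq h_\mu(\mu)$, while the jump identity $\lim_{x \to 1/2^+} h_\mu(x) = 1 - \tfrac{1}{2}h_\mu(\mu)$ gives $h_\mu(y_K) \geq 1 - \tfrac{1}{2}h_\mu(\mu)$. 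Combining these with $h_\mu(y_K) = 2^{K+1}(1-h_\mu(\mu))$ yields the two-sided squeeze
\[
	\frac{2^{K+1}}{2^{K+1}+1} \;\leq\; h_\mu(\mu) \;\leq\; 1 - \frac{1}{2^{K+2}-1}.
\]
On the set $I_K := \{\mu : y_{K-1}(\mu) \leq \tfrac{1}{2} < y_K(\mu)\}$, checking the conjecture against this upper bound reduces to verifying the algebraic inequality $(1-\mu)(\mu - \tfrac{1}{4}) \leq \tfrac{1}{2^{K+2}-1}$.

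For $K = 1$, which covers $\mu \in [\tfrac{1}{2}, \tfrac{1+\sqrt{5}}{4}]$, the inequality is immediate: the parabola $(1-\mu)(\mu - \tfrac{1}{4})$ peaks at $\tfrac{9}{64}$ (attained at $\mu = \tfrac{5}{8}$), and $\tfrac{9}{64} < \tfrac{1}{7}$. For $K \geq 2$ the direct comparison fails near the left endpoint of $I_K$, so a sharper upper bound on $h_\mu(\mu)$ is required. The plan here is to run a \emph{secondary} recursion: since $y_K > \tfrac{1}{2}$, the functional equation forces $h_\mu\bigl(2\mu(1-y_K)\bigr) = 2 - 2^{K+2}(1-h_\mu(\mu))$, and one iterates the lower-branch identity from this new starting point, then repeats each time the orbit crosses into $(\tfrac{1}{2},1]$. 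Each additional round of iteration tightens the squeeze by a factor of $2$ in both directions, and the benchmarks for how tight it must become are the exact values $h_{\mu_{n,a}}(\mu_{n,a}) = 1 - 1/(2^{n-1}+1)$, computed from the periodic-orbit condition $T_\mu^{n-1}(\tfrac{1}{2}) = \tfrac{1}{2}$ at the parameters from Table \ref{tab:table}.

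The main obstacle is assembling the infinite hierarchy of subcases into a single uniformly valid bound. Because $M_\mu$ is not order-preserving (it is order-reversing on $(\tfrac{1}{2},1]$), the clean strategy of exhibiting an auxiliary function $\phi$ with $M_\mu \phi \leq \phi$ and concluding $h_\mu \leq \phi$ is unavailable, so the argument must proceed by piecewise case analysis over the intervals $I_K$. Numerical experimentation suggests that losing even a single factor of $2$ in the recursion will cause the upper bound to fail on a neighborhood of some $\mu_{n,a}$, so the coupling between the primary and secondary recursions must be tracked carefully at every stage.
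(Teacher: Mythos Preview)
The statement you are attempting to prove is \emph{Conjecture 3} in the paper, not a theorem. The authors explicitly write that ``numerical evidence indicates that there is a better bound. However, we are unable to prove it at this time.'' There is therefore no proof in the paper to compare your proposal against.

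Your proposal is not a complete proof either, and you say so yourself. The reduction via the functional equation to the lower bound $h_\mu(4\mu^2(1-\mu)) \geq (1-\mu)(4\mu-1)$ is correct, and the two-sided squeeze
\[
	\frac{2^{K+1}}{2^{K+1}+1} \leq h_\mu(\mu) \leq 1 - \frac{1}{2^{K+2}-1}
\]
on the interval $I_K$ is valid and nicely derived. Your verification for $K=1$ is also correct: the parabola $(1-\mu)(\mu-\tfrac{1}{4})$ peaks at $9/64 < 1/7$, so the conjecture holds on $I_1$. But as you acknowledge, for $K \geq 2$ the bound $1 - 1/(2^{K+2}-1)$ is already too weak at the left endpoint of $I_K$ (for instance, at $\mu \approx 0.809$ one has $(1-\mu)(\mu-\tfrac{1}{4}) \approx 0.107 > 1/15$), and you only sketch a ``secondary recursion'' without carrying it out or showing that the infinite hierarchy of subcases closes up. That is precisely the missing idea: a uniform mechanism that controls the error across all $K$ simultaneously. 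Your own final paragraph correctly identifies this as the main obstacle, so what you have written is a partial attack on an open problem rather than a proof.
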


\begin{figure}[h]
	\includegraphics[width=6cm]{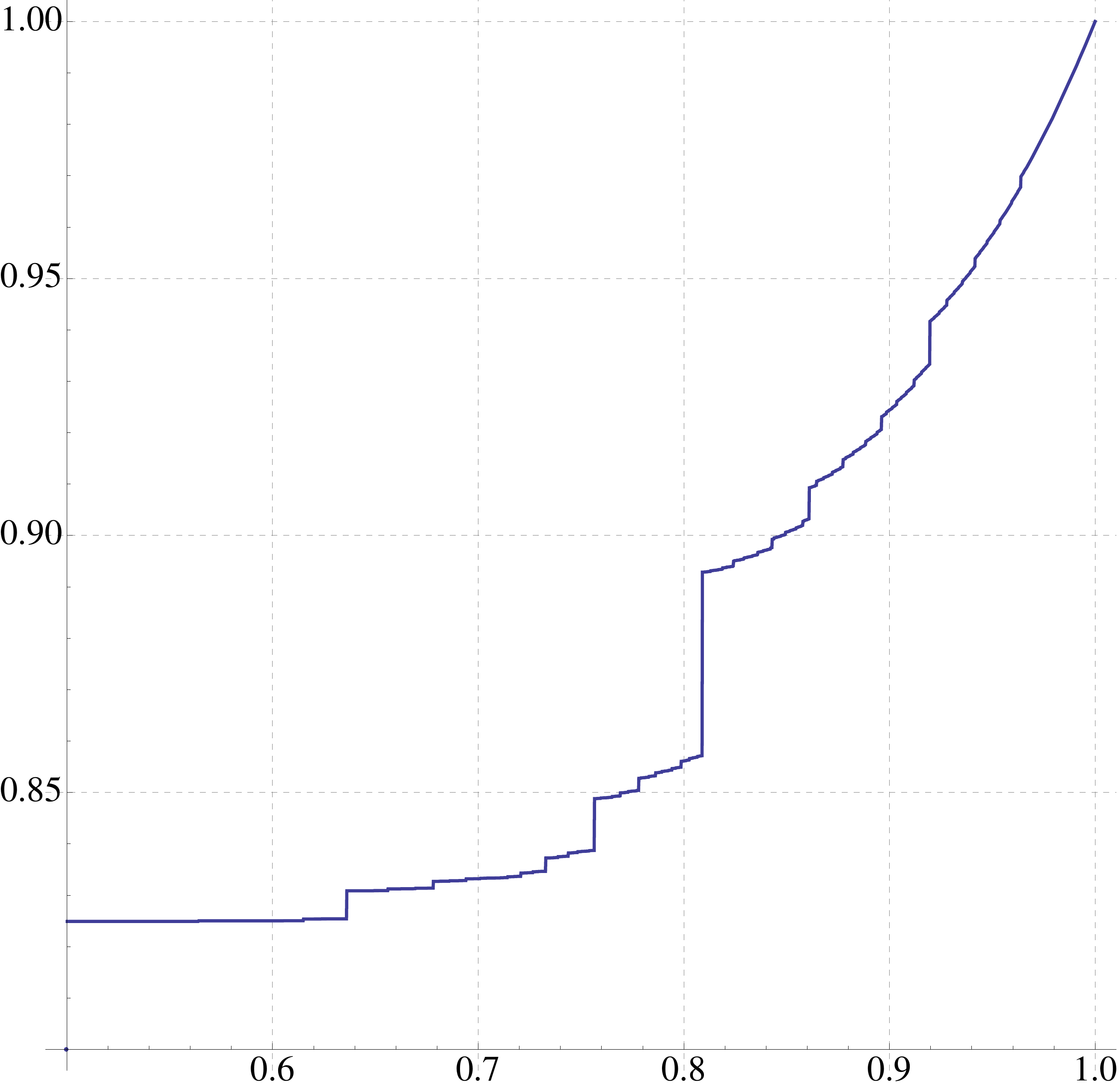}
	\hspace{2mm}
	\includegraphics[width=6cm]{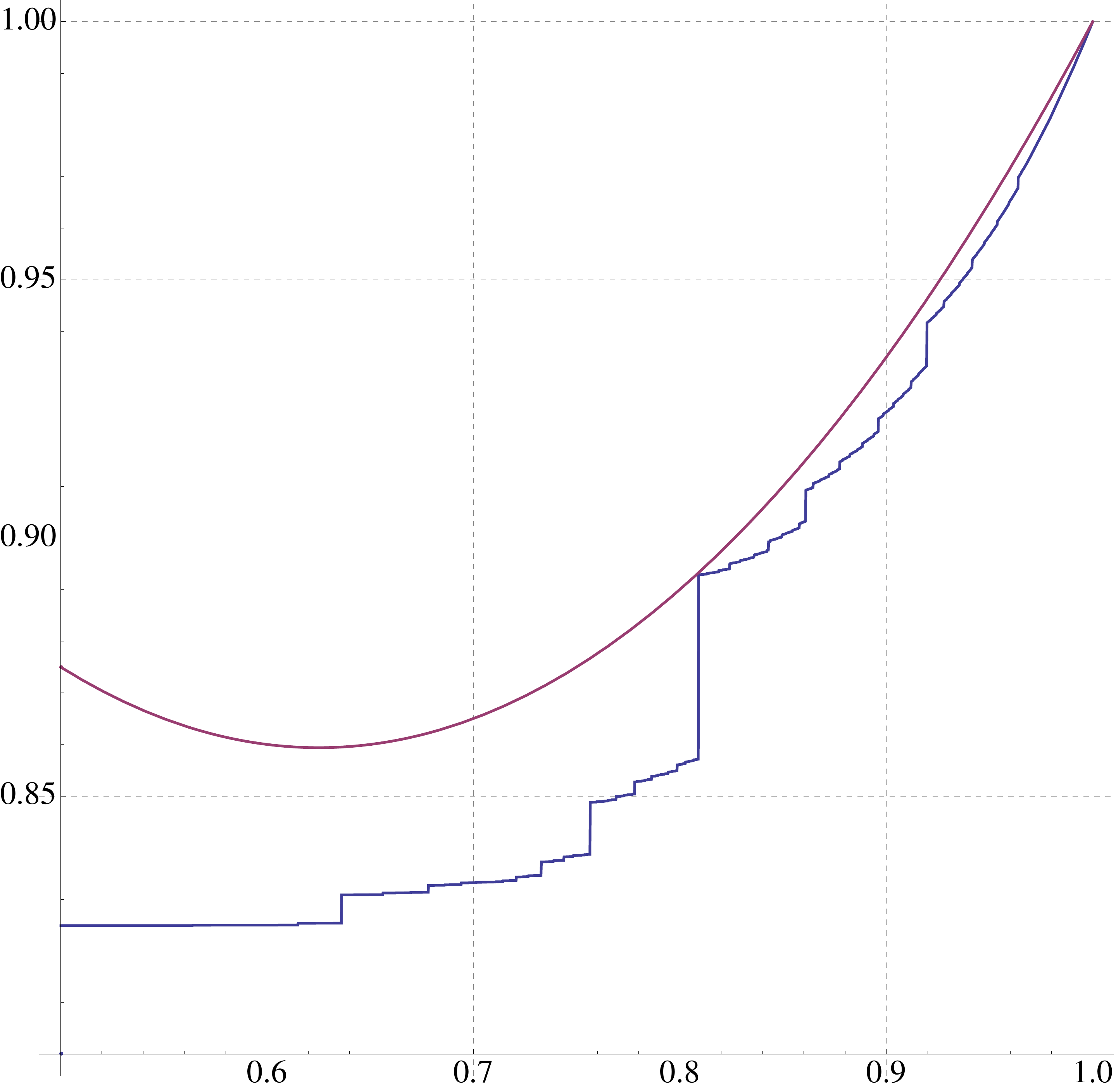}
	\caption{(Left) A plot of $h_\mu(\mu)$ versus $\mu$, for $1/2 < \mu \leq 1$, which supports Conjecture 2. (Right) A plot of $h_\mu(\mu)$ together with 
	$\mu^2 + \tfrac{5}{4}(1-\mu)$ for $1/2 < \mu \leq 1$, which appears to corroborate Conjecture 3.}
	\label{fig:figure3}
\end{figure}

\subsection*{Acknowledgements} The authors would like to thank the anonymous referees for helpful suggestions that improved the final version of the paper.

\bibliographystyle{amsplain}
\bibliography{ComboBib.bib}

\end{document}